\documentclass[11pt,reqno]{amsart}

\usepackage{amsmath,amssymb,amsthm,mathtools}
\usepackage[hidelinks]{hyperref}

\newtheorem{theorem}{Theorem}[section]
\newtheorem{proposition}[theorem]{Proposition}
\newtheorem{lemma}[theorem]{Lemma}
\newtheorem{corollary}[theorem]{Corollary}
\theoremstyle{definition}
\newtheorem{definition}[theorem]{Definition}

\theoremstyle{remark}
\newtheorem{remark}[theorem]{Remark}

\newcommand{\Q}{\mathbb{Q}}
\newcommand{\Unit}{[0,1]}
\newcommand{\Nat}{\mathbb{N}}
\newcommand{\UnitInterval}{[0,1]}
\newcommand{\UpSet}{\mathord{\uparrow}}

\title[A Quasicontinuous Domain Without an Interval Retract]
  {A Quasicontinuous Domain Without an Interval Retract}

\author{Chong Shen}
\address{School of Science, Beijing University of Posts and Telecommunications,
	Beijing, China}

\author{Xiaoyong Xi}
\address{School of Mathematics and Statistics,\\
Yancheng Teachers University, Yancheng, Jiangsu, China}

\author{Dongsheng Zhao}
\address{Mathematics and Mathematical Education, National Institute of Education,
Nanyang Technological University, 1 Nanyang Walk, Singapore}

\date{}
\subjclass[2020]{06B35, 06F30, 06A06}
\keywords{quasicontinuous domain, quasialgebraic domain,
Scott-continuous retract, binary expansion, dcpo}

\begin{document}

\begin{abstract}
We give negative answers to two questions of X.~Xu concerning the occurrence
of the unit interval in nonquasialgebraic quasicontinuous domains.  We
construct, directly from the binary tree and the binary-value map, a
well-founded quasicontinuous dcpo
\[
P=2^{<\omega}\mathbin{\dot\cup}[0,1]
\]
which is not quasialgebraic. Moreover,   $P$ contains no
sub-dcpo isomorphic to $[0,1]$, and $[0,1]$ is not a Scott-continuous
retract of $P$. 
\end{abstract}

\maketitle

\section{Introduction}

The unit interval $\Unit$ carries its usual order  is a basic
continuous dcpo which is not algebraic.  A recurring theme in domain theory
is to understand failures of algebraicity through the occurrence of such a
canonical interval; standard background may be found in
\cite{AbramskyJung,Gierz,GoubaultLarrecq}.  Quasicontinuity is a natural
weakening of continuity in which approximation is described by finite sets,
rather than by individual elements; it was introduced in this form by
Gierz, Lawson, and Stralka \cite{GLS}.

For continuous domains, Jia, Li, and Luan showed that every nonalgebraic
continuous domain admits $\Unit$ as a Scott-continuous retract
\cite{JiaLiLuan}.

In the quasicontinuous setting, Xu proved that every nonquasialgebraic
quasicontinuous domain admits a surjective monotone Lawson-continuous map
onto $\Unit$ \cite[Theorem~3.1]{Xu}.  Thus the interval is always present
as a monotone image.  The issue is whether it must already occur internally
as a retract or as a sub-dcpo.  Here, a Scott-continuous retract means
Scott-continuous maps
\[
        r:P\longrightarrow\Unit,\qquad e:\Unit\longrightarrow P
\]
such that $r\circ e=\operatorname{id}_{\Unit}$.

The following two questions were raised by Xu \cite{XuQuestions}.

\medskip
\noindent\textbf{Problem 1.14 (Xu).}
If $P$ is a quasicontinuous domain which is not quasialgebraic, must
$\Unit$ be a Scott-continuous retract of $P$?

\medskip
\noindent\textbf{Problem 1.15 (Xu).}
If $P$ is a quasicontinuous domain which is not quasialgebraic, must $P$
contain a sub-dcpo isomorphic to $\Unit$?

\medskip

For arbitrary dcpos, the two formulations are equivalent; see
Proposition~\ref{prop:equivalence}.  We retain both because they arise
naturally from the questions above.

\begin{theorem}[Direct counterexample]\label{thm:counterexample}
There is a well-founded quasicontinuous dcpo $P$ which is neither algebraic
nor quasialgebraic and has the following properties.
\begin{enumerate}
  \item Its maximal elements form an antichain of cardinality
  $2^{\aleph_0}$.
  \item Every chain in $P$ is order-isomorphic to a suborder of $\omega+1$.
  \item $P$ contains no sub-dcpo isomorphic to $\Unit$, and $\Unit$ is not a
  Scott-continuous retract of $P$.
\end{enumerate}
\end{theorem}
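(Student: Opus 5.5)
The plan is to take $P=2^{<\omega}\mathbin{\dot\cup}\Unit$ from the abstract. Here $2^{<\omega}$ is ordered by prefix. For $s\in 2^{<\omega}$ let $I_s=[\,\mathrm{val}(s),\mathrm{val}(s)+2^{-|s|}\,]$ be the closed dyadic interval of binary expansions extending $s$. Put $s\le x$ for $x\in\Unit$ iff $x\in I_s$, and let $\Unit$ be an antichain. I will verify the properties in this order: order and chain structure, the dcpo property, quasicontinuity, failure of (quasi)algebraicity, and finally item (3).

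\textbf{Order, chains and sups.} Transitivity holds because $s\preceq t$ gives $I_t\subseteq I_s$. Well-foundedness holds because strings have finite length and every string lies below every element of $\Unit$ that is above it. Since $\Unit$ is an antichain, it is exactly the set of maximal elements, which gives (1). A chain meets $\Unit$ in at most one point, and its part in the tree is a set of prefixes of a single branch. So every chain has order type at most $\omega+1$, which gives (2).

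For the dcpo property, let $D$ be directed. Two distinct points of $\Unit$ have no common upper bound, so if $D$ meets $\Unit$ it has a top element. Otherwise $D$ is a chain of strings. If $D$ is finite it has a top. If $D$ is infinite it determines a branch $b\in 2^\omega$; its upper bounds are the $x\in\bigcap_n I_{b|n}=\{\mathrm{val}(b)\}$, so $\sup D=\mathrm{val}(b)$. Hence the only nontrivial sups are $\mathrm{val}(b)=\bigvee_n b|n$. This makes $P$ non-algebraic: no point of $\Unit$ is compact, because $x=\sup_n b|n$ for any expansion $b$ of $x$.

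\textbf{Quasicontinuity.} This is the step I expect to be the main obstacle. A string $s$ need not be compact. If $s\ne 0^n,1^n$, then an endpoint of $I_s$ is $\mathrm{val}(b)$ for a branch $b$ not passing through $s$. So $\UpSet s$ is not Scott open, and approximation must genuinely use finite sets.

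My approach is to describe the Scott-open sets directly. A set $U$ is Scott open iff it is an upper set and, whenever $\mathrm{val}(b)\in U$, some $b|n\in U$. Both expansions of a dyadic point must be covered. For a string $s$ with $|s|=n$, let $F$ be the set of at most three strings of length $n$ whose intervals meet $I_s$, i.e.\ $s$ and its neighbours. I will show that $s\in\operatorname{int}\UpSet F$ by exhibiting a Scott open set between them. One candidate is $\UpSet s$ together with the neighbours' extensions whose intervals contain an endpoint of $I_s$. For $x\in\Unit$ and a Scott-open $U\ni x$, choose $n$ so that every length-$n$ string whose interval contains $x$ lies in $U$; there are at most two. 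Then take $F$ to be these together with their neighbours. The delicate check is that the chosen neighbourhoods are closed under the ``other expansion'' condition without cascading. I will do this by restricting to strings whose intervals meet a small open interval around $x$; such a restriction is finite at each level and absorbs both expansions of every dyadic point inside it. From this I get $x\in\operatorname{int}\UpSet F\subseteq U$, and by the standard characterization of quasicontinuity this suffices.

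\textbf{Non-quasialgebraicity and item (3).} For non-quasialgebraicity, the compact elements are only $\varnothing$, the strings $0^n$, $1^n$, and possibly a few others. I will check that a point such as $x=\tfrac13$ is not in the interior of $\UpSet G$ for any finite set $G$ of compact elements with $\UpSet G$ Scott open and contained in a small neighbourhood. Concretely, any Scott open set containing $x$ that is generated by compact elements must contain $0^n$ or $1^n$ and hence $\UpSet 0^n$, which contains points far from $x$.

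Item (3) then follows from (2). A sub-dcpo isomorphic to $\Unit$ would be an uncountable chain, contradicting (2). If $r\circ e=\mathrm{id}$ with $e,r$ Scott-continuous, then $e$ is monotone and injective, and $e(a)\le e(b)$ implies $a=r e(a)\le r e(b)=b$. So $e$ is an order embedding and $e(\Unit)$ is again an uncountable chain, which is impossible. Alternatively, one can invoke Proposition~\ref{prop:equivalence}.
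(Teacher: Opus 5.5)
Your proposal has two genuine gaps. The more serious one is in non-quasialgebraicity, which is the heart of the counterexample.

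\textbf{Non-quasialgebraicity.} Quasialgebraicity concerns \emph{all} finite $F$ with $F\ll F$, that is, all finite $F$ with $\UpSet F$ Scott open. The members of such an $F$ need not be compact, so excluding sets of compact elements proves nothing. Your list of compact elements is also wrong. For $n\ge1$ the word $0^n$ is not compact: $2^{-n}\in\pi([0^n])$ is the supremum of the prefixes of $0^{n-1}1000\cdots$, and none of these lies above $0^n$. The same holds for $1^n$. In fact the empty word is the only compact element.

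The missing idea is the paper's key step: if $F\ll F$ and $\UpSet F\neq\varnothing$, then $\Unit\subseteq\UpSet F$. The argument runs as follows.
\begin{itemize}
\item $F\ll F$ forces $E=F\cap T\neq\varnothing$.
\item $F\ll F$ also forces $U_E$ to be $\pi$-saturated. If one expansion of $r$ lies in $U_E$, then $r\in\UpSet F$. So the prefix chain of any other expansion of $r$ must meet $\UpSet F$, and that expansion therefore lies in $U_E$.
\item Hence $\pi(U_E)$ is open (because $\pi$ is a quotient map), compact and nonempty. By connectedness it equals $\Unit$.
\end{itemize}
Consequently $1\in\UpSet F$ whenever $0\in\UpSet F$ and $F\ll F$, so the intersection clause fails at $0$. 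Your claim that such a set ``must contain $0^n$'' does follow from this, since $U_E=2^\omega\ni 000\cdots$. But some propagation or connectedness argument of this kind has to be supplied. Without it, you have not excluded a finite set of non-compact words whose up-set is a proper Scott-open neighbourhood of $\tfrac13$.

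\textbf{Quasicontinuity at the tree points.} The characterization you invoke requires, for \emph{every} Scott-open $U\ni s$, a finite $F\subseteq U$ with $s\in\operatorname{int}\UpSet F$. Your $F$ consists of $s$ and its neighbours of length $|s|$. It does not depend on $U$ and is usually not inside it. For example, take $s=01$ and $0<\delta<\tfrac14$. The points of $(\tfrac14-\delta,\tfrac12+\delta)$, together with all words $u$ with $\pi([u])\subseteq(\tfrac14-\delta,\tfrac12+\delta)$, form a Scott-open set that contains $s$ but neither $00$ nor $10$. Your candidate open set is also not even an upper set as described.

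What works is to take $F$ to consist of $s$ together with sufficiently long prefixes, chosen inside $U$, of those expansions of the endpoints of $\pi([s])$ that do not pass through $s$. This is exactly what the paper's criterion encodes: $F\ll s$ iff $s\in\UpSet(F\cap T)$ and $\pi^{-1}(\pi([s]))\subseteq U_{F\cap T}$. From that criterion, filteredness and $\bigcap_{F\ll x}\UpSet F=\UpSet x$ follow by compactness and cylinder separation. For points of $\Unit$ the neighbours are unnecessary, and they lie in $U$ only after enlarging $n$.

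\textbf{Minor point.} ``No point of $\Unit$ is compact'' does not by itself refute algebraicity; the top of $\omega+1$ is not compact either. Non-algebraicity follows once non-quasialgebraicity is proved.

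The order, the dcpo property and items (1)--(3) are fine and agree with the paper. Your shrinking-interval computation of $\sup D=\pi(b)$ is a simpler substitute for the paper's compact-fibre argument.
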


Thus both questions have negative answers in general.  The construction is
defined entirely from finite binary words, infinite binary expansions, and
their numerical values under the binary-value map.  Moreover, it admits an
explicit Scott-continuous surjection onto $\Unit$.  Hence the obstruction is
stronger than the failure of a Lawson-continuous image to split: even a
Scott-continuous interval image need not be a Scott-continuous retract.

Section~\ref{sec:preliminaries} recalls the finite-set formulation of
quasicontinuity.  In Section~\ref{sec:construction} we construct the dcpo
and establish its elementary order-theoretic properties.  The following two
sections prove that the example is quasicontinuous and not quasialgebraic.
We then answer Xu's two problems and construct the Scott-continuous
surjection.  Finally, Section~\ref{sec:criterion} records the general
interval criterion used to identify the two formulations of the questions.


\section{Preliminaries}\label{sec:preliminaries}

We use the usual finite-set formulation of quasicontinuity; see
\cite[Chapter III]{Gierz}.  If $P$ is a dcpo and $A,B\subseteq P$, put
\[
 A\ll B
 \quad\Longleftrightarrow\quad
 \text{for every directed }D\subseteq P,\quad
 \bigvee D\in\uparrow B\Longrightarrow D\cap\uparrow A\neq\varnothing.
\]
For a singleton $B=\{x\}$, write $A\ll x$.  As usual,
\[
 \uparrow A=\{p\in P:\text{there is }a\in A\text{ with }a\leq p\}.
\]

\begin{definition}
A dcpo $P$ is \emph{quasicontinuous} if, for every $x\in P$, the family
\[
 \{\uparrow F:F\subseteq P\text{ finite and }F\ll x\}
\]
is filtered under inclusion and
\[
 \uparrow x=\bigcap_{F\ll x}\uparrow F.
\]
It is \emph{quasialgebraic} if, for every $x\in P$, the family
\[
 \{\uparrow F:F\subseteq P\text{ finite},\ x\in\uparrow F,\ F\ll F\}
\]
is filtered under inclusion and has intersection $\uparrow x$.
\end{definition}

For comparison, an element $c\in P$ is \emph{compact} if $c\ll c$.  A dcpo
is \emph{algebraic} if, for every $x\in P$, the compact elements below $x$
form a directed set with supremum $x$.
Every algebraic dcpo is quasialgebraic.

\section{A Counterexample from the Binary-Expansion Order}
\label{sec:construction}

Let $2=\{0,1\}$, let $2^n$ denote the set of binary words of length $n$,
and put
\[
T=2^{<\omega} =\bigcup_{n<\omega}2^n
\]
be the set of finite binary words, ordered by the prefix relation
$\preccurlyeq$.

Let
\[
X=2^\omega
\]
be the set of all infinite binary sequences.  If $\alpha\in X$, then
$\alpha\upharpoonright n\in2^n$ denotes its restriction to the first $n$
coordinates.  For $s\in T$ and $\alpha\in X$, write
\[
        s\preccurlyeq\alpha
        \quad\Longleftrightarrow\quad
        s=\alpha\upharpoonright |s|.
\]

Endow $2=\{0,1\}$ with the
discrete topology, and equip $X=2^{\omega}$ with the product topology.  Thus $X$ is
Cantor space.

For $s=(s_0,\ldots,s_{n-1})\in T$, where $n=|s|$, put
\[
[s]=\{\alpha\in X:\alpha\upharpoonright n=s\}
=\{\alpha\in X:s\preccurlyeq\alpha\}.
\]
Thus $[s]$ consists of all infinite binary sequences extending $s$.  The
cylinder $[s]$ is clopen: it is open because it prescribes only finitely
many coordinates, and
\[
X\setminus[s]=\bigcup_{\substack{t\in 2^n\\t\ne s}}[t]
\]
is open as well.  Moreover, the family $\{[s]:s\in T\}$ is a basis for the
product topology on $X$.

\begin{lemma}
	The Cantor space $X=2^\omega$ is a compact Hausdorff space.
\end{lemma}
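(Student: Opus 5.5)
The plan is to treat the two properties separately. Hausdorffness is immediate from the cylinder basis. Compactness I would prove directly by a König-type argument on the binary tree, rather than quoting Tychonoff, to keep the paper self-contained.

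\emph{Hausdorff.} Let $\alpha\neq\beta$ in $X$, and choose $n$ with $\alpha(n)\neq\beta(n)$. Put $s=\alpha\upharpoonright(n+1)$ and $t=\beta\upharpoonright(n+1)$. These are distinct words of the same length, so the clopen cylinders $[s]$ and $[t]$ are disjoint. They are open neighbourhoods of $\alpha$ and $\beta$ respectively, which separates the two points.

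\emph{Compactness.} Since $\{[s]:s\in T\}$ is a basis, it suffices to show that every cover of $X$ by basic cylinders has a finite subcover. Each open set is a union of cylinders, so a finite cylinder subcover refining an arbitrary open cover gives a finite subcover of that cover.

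Suppose, for contradiction, that $\mathcal{C}\subseteq\{[s]:s\in T\}$ covers $X$ but no finite subfamily does. Call $u\in T$ \emph{bad} if $[u]$ is not covered by finitely many members of $\mathcal{C}$. The empty word is bad, because $[\varnothing]=X$. Since $[u]=[u0]\cup[u1]$, if $u$ is bad then at least one of $u0$, $u1$ is bad. We recursively choose bad words $u_0\preccurlyeq u_1\preccurlyeq\cdots$ with $|u_n|=n$, and let $\alpha\in X$ be the sequence with $\alpha\upharpoonright n=u_n$ for every $n$. Some $[s]\in\mathcal{C}$ contains $\alpha$. Then $s\preccurlyeq\alpha$, so with $m=|s|$ we get $s=u_m$, and $[u_m]=[s]$ is covered by a single member of $\mathcal{C}$. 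This contradicts the badness of $u_m$.

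The main point needing care is the reduction to basic covers, and it is routine. The recursive choice uses only dependent choice along a binary tree, and can be made canonical by always taking the left child $u0$ when it is bad. No step should present a real obstacle. Alternatively, one could simply cite Tychonoff's theorem together with the fact that a product of Hausdorff spaces is Hausdorff.
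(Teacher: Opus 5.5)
Your Hausdorff argument is the same as the paper's: you separate $\alpha\neq\beta$ by the disjoint cylinders $[\alpha\upharpoonright(n+1)]$ and $[\beta\upharpoonright(n+1)]$. For compactness you take a different route. The paper only notes that $2$ is finite discrete, hence compact, and applies Tychonoff's theorem, which is the alternative you mention at the end. Your main argument is instead a direct bisection (weak K\"onig) proof on the binary tree, and it is correct. Refining an arbitrary open cover to the cylinders lying inside its members reduces to basic covers. Badness of $u$ passes to at least one child because $[u]=[u0]\cup[u1]$. The branch $\alpha$ built from bad words lies in some $[s]\in\mathcal{C}$, which forces $s=u_{|s|}$ to be good, a contradiction. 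The paper's version buys brevity by citing a standard theorem. Yours buys self-containment, and with the leftmost-bad-child rule it uses no choice principle at all, whereas the general Tychonoff theorem rests on the axiom of choice. It also works entirely with the cylinder basis and nested-cylinder reasoning. The paper relies on exactly that reasoning later, for instance in Lemma~\ref{lem:directed} and in the finite-subcover arguments of Lemmas~\ref{lem:refine} and~\ref{lem:separate}.
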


\begin{proof}
	Since $2=\{0,1\}$ is a finite discrete space, it is compact.  Hence
	$X=2^\omega$ is compact by Tychonoff's theorem.  To see that $X$ is
	Hausdorff, let $\alpha,\beta\in X$ with $\alpha\ne\beta$.  Choose $n$ such
	that $\alpha(n)\ne\beta(n)$.  Then the two cylinders
	\[
	[\alpha\upharpoonright(n+1)]
	\qquad\text{and}\qquad
	[\beta\upharpoonright(n+1)]
	\]
	are disjoint open neighbourhoods of $\alpha$ and $\beta$, respectively.
	Thus $X$ is Hausdorff.
\end{proof}

Define the usual binary-value map
\[
\pi:X\longrightarrow\UnitInterval,\qquad
\pi(\alpha)=\sum_{n=0}^{\infty}\frac{\alpha(n)}{2^{n+1}}.
\]
The elementary properties of binary expansions give the following.
\begin{enumerate}
	\item The map $\pi$ is surjective but not injective.  For example,
	\[
	\pi(01111\ldots)=\pi(10000\ldots)=\frac12.
	\]
	For every $r\in\UnitInterval$, the fibre $\pi^{-1}(\{r\})$ is nonempty and
	has at most two elements; in particular, it is compact.
	
	\item If $s,t\in T$ and $s\preccurlyeq t$, then
	\[
	\pi([t])\subseteq\pi([s]).
	\]
\end{enumerate}
\begin{lemma}\label{lem:quotient}
	The binary-value map
	\[
	\pi:X\longrightarrow\UnitInterval
	\]
	is a quotient map.
\end{lemma}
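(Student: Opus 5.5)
The plan is to use the standard fact that a continuous surjection from a compact space onto a Hausdorff space is closed, and that a closed continuous surjection is a quotient map. So there are three steps: show $\pi$ is continuous, note that it is surjective, and combine compactness of $X$ with the Hausdorff property of $\UnitInterval$.

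\emph{Continuity.} Fix $\alpha\in X$ and $\varepsilon>0$, and choose $n$ with $2^{-n}<\varepsilon$. Let $\beta$ lie in the basic neighbourhood $[\alpha\upharpoonright n]$. Then $\alpha$ and $\beta$ agree on the first $n$ coordinates, so
\[
|\pi(\alpha)-\pi(\beta)|\le\sum_{k\ge n}\frac{1}{2^{k+1}}=\frac{1}{2^{n}}<\varepsilon .
\]
This proves continuity at $\alpha$. Equivalently, $\pi$ is the uniform limit of the partial-sum maps. Each partial-sum map depends on finitely many coordinates and is therefore locally constant.

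\emph{Surjectivity.} This is item (1) in the list above. Alternatively, it follows directly from the existence of binary expansions of every $r\in\UnitInterval$.

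\emph{Closedness and quotient.} Let $C\subseteq X$ be closed. Since $X$ is compact by the preceding lemma, $C$ is compact. Then $\pi(C)$ is compact because $\pi$ is continuous. A compact subset of the Hausdorff space $\UnitInterval$ is closed, so $\pi$ is a closed map.

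To get the quotient property, let $U\subseteq\UnitInterval$ with $\pi^{-1}(U)$ open. The set $\pi^{-1}(\UnitInterval\setminus U)=X\setminus\pi^{-1}(U)$ is closed. Because $\pi$ is surjective, its image is exactly $\UnitInterval\setminus U$. That image is closed, since $\pi$ is closed, and hence $U$ is open. Conversely, preimages of open sets are open by continuity. So $\pi$ is a quotient map.

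There is no real obstacle here. The only point needing care is the continuity estimate, together with correctly invoking surjectivity in the last step; without surjectivity, $\pi(\pi^{-1}(V))$ could be a proper subset of $V$.
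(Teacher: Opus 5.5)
Your proof is correct and follows the paper's argument essentially step for step. Both use the same $2^{-n}$ tail estimate on the cylinder $[\alpha\upharpoonright n]$ for continuity, then compactness of $X$ and the Hausdorff property of $[0,1]$ to get a closed map, and finally surjectivity to pass from closed to quotient.
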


\begin{proof}
	We first prove that $\pi$ is continuous.  Fix $\alpha\in X$, and let
	$\varepsilon>0$.  Choose $N\in\mathbb{N}$ such that
	\[
	\frac{1}{2^N}<\varepsilon.
	\]
	If $\beta\in[\alpha\upharpoonright N]$, then $\alpha(n)=\beta(n)$ for all
	$n<N$.  Hence
	\[
	\left|\pi(\alpha)-\pi(\beta)\right|
	\leq\sum_{n=N}^{\infty}\frac{1}{2^{n+1}}
	=\frac{1}{2^N}
	<\varepsilon.
	\]
	Thus $\pi$ is continuous.
	
	By the preceding discussion, $\pi$ is surjective.  Let $C\subseteq X$ be
	closed.  Since $X$ is compact, $C$ is compact.  Hence $\pi(C)$ is compact,
	and therefore closed in the Hausdorff space $\UnitInterval$.  Thus $\pi$ is
	a closed map.
	
	Finally, let $V\subseteq\UnitInterval$ be such that $\pi^{-1}(V)$ is open.
	Then $X\setminus\pi^{-1}(V)$ is closed, and hence
	\[
	\UnitInterval\setminus V
	=\pi\bigl(X\setminus\pi^{-1}(V)\bigr)
	\]
	is closed, where surjectivity of $\pi$ is used in the equality.  Therefore
	$V$ is open in $[0,1]$.  This proves that $\pi$ is a quotient map.
\end{proof}
We now define
\[
P=T\mathbin{\dot\cup}\UnitInterval
\]
to be the disjoint union of $T$ and $[0,1]$, equipped with the following order:
\begin{align*}
	s\leq t
	&\quad\Longleftrightarrow\quad s\preccurlyeq t
	&& (s,t\in T),\\
	s\leq r
	&\quad\Longleftrightarrow\quad r\in\pi([s])
	&& (s\in T,\ r\in\UnitInterval),\\
	r\leq q
	&\quad\Longleftrightarrow\quad r=q
	&& (r,q\in\UnitInterval).
\end{align*}
No other comparabilities are imposed.  Thus the points of $\UnitInterval$
are pairwise incomparable maximal elements.  A word $s\in T$ lies below
exactly those maximal elements having a binary expansion that extends $s$.

\begin{proposition}
	The relation $\leq$ is a partial order on $P$, and
	\[
	\operatorname{Max}(P)=\UnitInterval.
	\]
\end{proposition}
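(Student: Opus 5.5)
We verify the three partial-order axioms directly from the defining clauses, splitting each into cases according to whether the elements involved lie in $T$ or in $\UnitInterval$. Then we identify the maximal elements.

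\emph{Reflexivity.} For $s\in T$ we have $s\preccurlyeq s$. For $r\in\UnitInterval$ we have $r=r$. There is no mixed case.

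\emph{Antisymmetry.} Suppose $x\le y$ and $y\le x$. Since no element of $\UnitInterval$ lies below an element of $T$, the two elements lie on the same side. If both are words, antisymmetry of the prefix order gives $x=y$. If both are reals, the order is equality.

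\emph{Transitivity.} Suppose $x\le y\le z$. The possible type patterns are $TTT$, $TTU$, $TUU$ and $UUU$, writing $U$ for $\UnitInterval$. No pattern has a word above a real, since nothing in $T$ lies above a real.
\begin{itemize}
\item $TTT$ and $UUU$ are immediate: the first from transitivity of the prefix order, the second because the order on $\UnitInterval$ is equality.
\item $TUU$: here $y=z$, so $x\le z$ trivially.
\item $TTU$: this is the only substantive case. Here $s\preccurlyeq t$ and $r\in\pi([t])$. The stated property~(2), namely $\pi([t])\subseteq\pi([s])$, gives $r\in\pi([s])$, that is, $s\le r$.
\end{itemize}
For completeness, property~(2) holds because $s\preccurlyeq t$ implies $[t]\subseteq[s]$: any $\alpha$ extending $t$ extends $s$.

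\emph{Maximal elements.} We show $\operatorname{Max}(P)=\UnitInterval$ by two inclusions.

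Each $r\in\UnitInterval$ is maximal. Anything above $r$ must be a real, since nothing in $T$ lies above a real. The only real above $r$ is $r$ itself.

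No word $s$ is maximal. Let $n=|s|$ and consider the strictly longer word $s0$. It satisfies $s\preccurlyeq s0$ and $s\neq s0$, so $s<s0$. Alternatively, $[s]$ is nonempty, because it contains $s$ followed by zeros, so $\pi([s])\neq\varnothing$, and any $r\in\pi([s])$ satisfies $s<r$.

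Hence $\operatorname{Max}(P)=\UnitInterval$. The only step needing any thought is the mixed transitivity case $TTU$, which reduces to the monotonicity of cylinders noted above; everything else is bookkeeping.
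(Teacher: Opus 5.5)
Your proof is correct and follows essentially the same route as the paper. The only substantive step is the mixed transitivity case $s\preccurlyeq t\le r$, which you handle via $\pi([t])\subseteq\pi([s])$; you also spell out the case analysis and why no word is maximal (e.g.\ $s<s0$), which the paper simply attributes to ``the definition of the order.''
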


\begin{proof}
	Reflexivity and antisymmetry are immediate.  For transitivity, the only
	nontrivial mixed case is
	\[
	s\preccurlyeq t\leq r,
	\qquad s,t\in T,\quad r\in\UnitInterval.
	\]
	Since $t\leq r$, we have $r\in\pi([t])$.  As
	\[
	\pi([t])\subseteq\pi([s]),
	\]
	it follows that $r\in\pi([s])$, and hence $s\leq r$.  All remaining cases
	follow from transitivity of the prefix order or from equality on
	$\UnitInterval$.
	
	Moreover, by the definition of the order,
$
	\operatorname{Max}(P)=\UnitInterval$.
\end{proof}

\begin{proposition}\label{prop:well-founded}
	The poset $P$ is well-founded.
\end{proposition}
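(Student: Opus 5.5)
To show that $P$ is well-founded we prove the equivalent statement that $P$ has no strictly decreasing sequence $x_0>x_1>x_2>\cdots$. (Equivalently, every nonempty subset has a minimal element; the two formulations agree by dependent choice, which we use freely.) The argument uses a rank function on $P$. Define $\rho:P\to\omega+1$ by $\rho(s)=|s|$ for $s\in T$ and $\rho(r)=\omega$ for $r\in\UnitInterval$. We check that $x<y$ implies $\rho(x)<\rho(y)$; well-foundedness then follows from the well-foundedness of the ordinal $\omega+1$.

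The check of strict monotonicity splits according to the three clauses defining the order.
\begin{enumerate}
  \item If $s,t\in T$ and $s<t$, then $s$ is a proper prefix of $t$, so $|s|<|t|$.
  \item If $s\in T$ and $r\in\UnitInterval$ with $s\le r$, then $\rho(s)=|s|<\omega=\rho(r)$.
  \item Two distinct elements of $\UnitInterval$ are never comparable, so this case never produces a strict inequality.
\end{enumerate}
No element of $\UnitInterval$ lies below an element of $T$, so these are all the cases.

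Given this, suppose $x_0>x_1>\cdots$ were an infinite strictly decreasing sequence. Then $\rho(x_0)>\rho(x_1)>\cdots$ would be an infinite strictly decreasing sequence in $\omega+1$, which is impossible. More concretely, at most $x_0$ can lie in $\UnitInterval$, because an element of $\UnitInterval$ has nothing strictly above it. From $x_1$ onward all terms lie in $T$, and their lengths strictly decrease in $\omega$, a contradiction.

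There is no genuine obstacle here. The only point needing care is that the case analysis covers every comparable pair; this is immediate from the clause ``No other comparabilities are imposed'' in the definition of the order.
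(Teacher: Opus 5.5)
Your proof is correct and is essentially the paper's argument. The paper also observes that in a strictly descending chain only the first term can lie in $[0,1]$, and that the remaining terms lie in $T$ with strictly decreasing lengths; your rank function $\rho:P\to\omega+1$ is just a tidy packaging of that same observation.
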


\begin{proof}
	In every strictly descending chain, at most the first element can lie in
	$\UnitInterval$.  All remaining elements lie in $T$, where strict descent
	in the prefix order strictly decreases word length.  Hence every strictly
	descending chain is finite.
\end{proof}

\begin{remark}
	The one-letter word $(1)\in T$ must not be confused with the real number
	$1\in\UnitInterval$.  They are distinct elements of the disjoint union
	\[
	P=T\mathbin{\dot\cup}\UnitInterval.
	\]
	Since
	\[
	\pi([(1)])=\left[\frac12,1\right],
	\]
	we have
	\[
	(1)\leq r
	\quad\Longleftrightarrow\quad
	r\in\left[\frac12,1\right]
	\qquad(r\in\UnitInterval).
	\]
	In particular,
	\[
	(1)\leq 1,
	\qquad\text{but}\qquad
	(1)\neq 1.
	\]
	Moreover, $1\nleq(1)$, since $1$ is a maximal element of $P$.
\end{remark}

\subsection{Directed subsets and the dcpo property}

\begin{lemma}[Classification of directed sets]\label{lem:directed}
	Let $D$ be a nonempty directed subset of $P$.
	\begin{enumerate}
	\item If $D$ contains a maximal element $r\in\UnitInterval$, then $r$ is the maximum of $D$,
		and hence $\bigvee D=r$.
		\item If $D\subseteq T$, then $D$ is a prefix chain.  If its word lengths are
		bounded, it has a maximum.  If its word lengths are unbounded, then there is a
		unique $\alpha\in X$ such that
		\[
		\bigvee D=\pi(\alpha),\qquad
		\alpha=\bigcup_{s\in D}s.
		\]
	\end{enumerate}
\end{lemma}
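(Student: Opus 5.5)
The plan is to split on whether $D$ meets $\UnitInterval$. I would use only the definition of the order on $P$ and the fact that $\pi$ is continuous (Lemma~\ref{lem:quotient}).

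For (1), suppose $r\in D\cap\UnitInterval$ and let $d\in D$. Directedness gives some $e\in D$ with $r\le e$ and $d\le e$. Since $r$ is maximal in $P$, we get $e=r$, and therefore $d\le r$. So $r$ is the maximum of $D$, and its supremum in $P$ is $r$.

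For (2), let $D\subseteq T$. Take $s,t\in D$ and an upper bound $u\in D$ of both. Then $u\in T$, so $s\preccurlyeq u$ and $t\preccurlyeq u$. Two prefixes of the same word are comparable, hence $D$ is a prefix chain. A prefix chain contains at most one word of each length. If the lengths are bounded, the longest word in $D$ is therefore its maximum, and it is also the supremum.

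If the lengths are unbounded, the words in $D$ are pairwise compatible and have arbitrarily large length. Their union is then a well-defined infinite sequence $\alpha=\bigcup_{s\in D}s\in X$ with $s\preccurlyeq\alpha$ for all $s\in D$. It is unique, because every initial segment $\alpha\upharpoonright n$ is determined by any $s\in D$ with $|s|\ge n$.

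Next I show that $\pi(\alpha)$ is an upper bound of $D$. For each $s\in D$ we have $\alpha\in[s]$, so $\pi(\alpha)\in\pi([s])$, which means $s\le\pi(\alpha)$.

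The main step is that $\pi(\alpha)$ is the least upper bound. Let $u$ be any upper bound of $D$. Then $u\notin T$: an upper bound in $T$ would have to be a word extending words of unbounded length, which is impossible. So $u=q\in\UnitInterval$, and $q\in\pi([s])$ for every $s\in D$.

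I would show $q=\pi(\alpha)$ by a length estimate. Each $[s]$ is contained in the set of sequences agreeing with $\alpha$ on the first $|s|$ coordinates. The estimate in the proof of Lemma~\ref{lem:quotient} then gives $\pi([s])\subseteq[\pi(\alpha)-2^{-|s|},\pi(\alpha)+2^{-|s|}]$. Since $|s|$ is unbounded, $|q-\pi(\alpha)|\le 2^{-|s|}$ for arbitrarily large $|s|$, so $q=\pi(\alpha)$.

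Thus $\pi(\alpha)$ is the unique upper bound of $D$, and hence $\bigvee D=\pi(\alpha)$. This holds even when a different expansion $\beta$ has the same value, because only the real number $\pi(\alpha)$ matters.
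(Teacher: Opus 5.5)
Your proof is correct. Part (1), the prefix-chain argument and the bounded case coincide with the paper's proof. You differ only in the key step of the unbounded case, showing that an arbitrary upper bound $q$ must equal $\pi(\alpha)$.

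\textbf{The paper's route.} It stays in Cantor space. For each $n$, some $s\in D$ extends $\alpha\upharpoonright n$ and satisfies $s\le q$, so the fibre $\pi^{-1}(\{q\})$ meets the cylinder $[\alpha\upharpoonright n]$. Compactness of the fibre, together with $\bigcap_n[\alpha\upharpoonright n]=\{\alpha\}$, then gives $\alpha\in\pi^{-1}(\{q\})$.

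\textbf{Your route.} You work directly in $[0,1]$. The tail estimate from the proof of Lemma~\ref{lem:quotient} traps $q$ in intervals of radius $2^{-|s|}$ around $\pi(\alpha)$, with $|s|$ unbounded.

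\textbf{Comparison.} Your argument is more elementary and self-contained: it needs only an estimate already proved in the paper. It avoids both the unproved remark that fibres have at most two elements and a nested-intersection compactness argument. The paper's argument uses the same fibre-and-cylinder compactness language as the later lemmas (Lemmas~\ref{lem:criterion}, \ref{lem:refine}, \ref{lem:separate}). It also needs only that fibres of $\pi$ are compact, not any quantitative bound on the size of the sets $\pi([s])$.
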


\begin{proof}
	Suppose first that $r\in D\cap\UnitInterval$.  For every $d\in D$, directedness
	provides $u\in D$ with $r,d\le u$.  Since $r$ is maximal, $u=r$, so $d\le r$.
	Thus $r$ is the maximum of $D$.
	
	Assume next that $D\subseteq T$.  Two incomparable finite words have no common upper
	bound in $T$, so directedness forces every two elements of $D$ to be comparable.
	Therefore $D$ is a prefix chain.  If the lengths are bounded, only finitely many
	words can occur, and a word of maximum length is the maximum of $D$.
	
	Finally, suppose that the lengths are unbounded and put $\alpha=\bigcup D$.  Then
	every $s\in D$ is a prefix of $\alpha$, and hence $s\le\pi(\alpha)$.  So
	$\pi(\alpha)$ is an upper bound.  Let $r$ be any upper bound of $D$.  It cannot be a
	finite word, because then its length would bound the lengths in $D$.  Hence
	$r\in\UnitInterval$.  For every $n$, some element of $D$ extends
	$\alpha\upharpoonright n$.  Since that element is below $r$,
	\[
	\pi^{-1}(\{r\})\cap[\alpha\upharpoonright n]\ne\varnothing.
	\]
	The fibre $\pi^{-1}(\{r\})$ is compact, and its intersections with the
	nested cylinders $[\alpha\upharpoonright n]$ are nonempty.  Their
	intersection is therefore nonempty; since
	\[
	\bigcap_{n\in\Nat}[\alpha\upharpoonright n]=\{\alpha\},
	\]
	we have $\alpha\in\pi^{-1}(\{r\})$, and therefore $r=\pi(\alpha)$.  This proves both uniqueness and
	the asserted supremum.
\end{proof}

\begin{proposition}\label{prop:dcpo}
	$P$ is a dcpo.
\end{proposition}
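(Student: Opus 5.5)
The plan is to derive Proposition~\ref{prop:dcpo} directly from Lemma~\ref{lem:directed}. By the usual convention the empty set is not directed, so it suffices to show that every nonempty directed subset $D\subseteq P$ has a supremum in $P$. I will split into cases according to whether $D$ meets $\UnitInterval$.

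\emph{Case 1: $D\cap\UnitInterval\neq\varnothing$.} Pick $r\in D\cap\UnitInterval$. By Lemma~\ref{lem:directed}(1), $r$ is the maximum of $D$, so $\bigvee D=r$ exists.

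\emph{Case 2: $D\subseteq T$.} By Lemma~\ref{lem:directed}(2), $D$ is a prefix chain. If the word lengths in $D$ are bounded, $D$ has a maximum element, which is its supremum. If the lengths are unbounded, the same lemma gives $\alpha=\bigcup_{s\in D}s\in X$ with $\bigvee D=\pi(\alpha)$. Here the lemma has already shown both that $\pi(\alpha)$ is an upper bound and that every upper bound equals $\pi(\alpha)$, so this is indeed a least upper bound.

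Since these cases are exhaustive, $P$ is a dcpo. The only point requiring care is the unbounded case, where one must have the uniqueness of upper bounds; this compactness argument is already carried out in Lemma~\ref{lem:directed}. I therefore expect no real obstacle beyond citing that lemma correctly.
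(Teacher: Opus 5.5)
Your proposal is correct and takes the same approach as the paper, whose proof simply cites Lemma~\ref{lem:directed} to get a supremum for every nonempty directed subset. Your case split ($D$ meets $\UnitInterval$, or $D\subseteq T$) just makes explicit that the lemma's two cases are exhaustive.
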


\begin{proof}
	Lemma~\ref{lem:directed} gives a supremum for every nonempty directed subset of $P$.
\end{proof}

\subsection{An exact finite-approximation criterion}

For $s\in T$, put
\[
        C_s=\pi^{-1}(\pi([s])).
\]
The compact set $C_s$ contains all branches whose binary values fall in the
interval $\pi([s])$, not merely the branches in $[s]$.  This enlargement is
needed because a dyadic value may have two binary expansions.  For example,
let $s=(1)$.  Then
\[
\pi([s])=\{\pi(\alpha):s\preccurlyeq \alpha\in 2^\omega\}=\left[\frac12,1\right].
\]
Although $(01111\cdots)\notin[s]$, we have
\[
\pi(01111\cdots)=\frac12=\pi(10000\cdots)\in\pi([s]).
\]
Hence
\[
(01111\cdots)\in C_s\setminus[s].
\]
In fact,
\[
C_{(1)}=[(1)]\cup\{(01111\cdots)\}.
\]

Let $E\subseteq T=2^{<\omega}$ be a finite set. 
Put
\[
U_E=\bigcup_{e\in E}[e]\subseteq 2^\omega.
\]
This is a finite union of cylinders, hence clopen. 

\begin{lemma}\label{lem:criterion}
	Let $r\in\UnitInterval$, $s\in T$, and let $F\subseteq P$ be finite.
	Then
	\begin{itemize}
		\item [\rm (1)] $F\ll r
		\quad\Longleftrightarrow\quad
		\pi^{-1}(\{r\})\subseteq U_{F\cap 2^{<\omega}}$. 
		\item [\rm (2)] $F\ll s
		\quad\Longleftrightarrow\quad
		s\in \UpSet (F\cap 2^{<\omega})
		\quad\text{and}\quad
		\pi^{-1}(\pi([s]))\subseteq U_{F\cap 2^{<\omega}}$. 
	\end{itemize}
	In particular, the maximal elements $F\cap[0,1]$ do not affect either
	condition.  Consequently, for every $x\in P$,
	\[
	F\ll x \quad\Longleftrightarrow\quad F\cap T\ll x.
	\]
\end{lemma}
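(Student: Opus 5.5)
The plan is to reduce everything to the classification of directed sets in Lemma~\ref{lem:directed}. By that lemma, a directed $D$ with $\bigvee D\in\UpSet B$ is of one of three kinds: it has a maximum (a word or a point of $[0,1]$), or it is an unbounded prefix chain with $\bigvee D=\pi(\alpha)$ where $\alpha=\bigcup D$. A directed set with a maximum meets $\UpSet A$ exactly when its maximum lies in $\UpSet A$. Therefore $A\ll x$ holds if and only if two things hold: $x\in\UpSet A$ (take $D=\{x\}$), and for every $\alpha\in X$ with $x\le\pi(\alpha)$, some prefix $\alpha\upharpoonright n$ lies in $\UpSet A$. The second condition is the chain $D=\{\alpha\upharpoonright n\}$. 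Since words lie only above words, a prefix of $\alpha$ lies in $\UpSet A$ if and only if it extends some $e\in A\cap T$, that is, if and only if $\alpha\in U_{A\cap T}$.

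For (1), take $x=r$. The chains $\alpha\upharpoonright n$ with $\bigvee=r$ are exactly those with $\alpha\in\pi^{-1}(\{r\})$. So the chain condition reads $\pi^{-1}(\{r\})\subseteq U_{F\cap T}$. This inclusion already forces $r\in\UpSet F$: the fibre is nonempty, so some $\alpha$ with $\pi(\alpha)=r$ extends some $e\in F\cap T$, giving $e\le r$. Maximal elements of $F$ can only be hit by a directed set containing them, and that case is covered by $r\in\UpSet(F\cap T)$. Hence $F\ll r$ is equivalent to the inclusion.

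For (2), take $x=s$. The points $\pi(\alpha)$ above $s$ are exactly the reals in $\pi([s])$. The branches $\alpha$ with $s\le\pi(\alpha)$ are therefore exactly those in $\pi^{-1}(\pi([s]))=C_s$. The chain condition becomes $C_s\subseteq U_{F\cap T}$. The condition $s\in\UpSet F$ becomes $s\in\UpSet(F\cap T)$, because no real lies below a word. Together these give (2). The final consequence follows because both criteria mention only $F\cap T$.

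The main obstacle is the careful justification of the reduction in the first paragraph. I need that it suffices to test the particular chains $\{\alpha\upharpoonright n\}$, and that for a general unbounded chain $D$ with union $\alpha$, meeting $\UpSet A$ is equivalent to some prefix of $\alpha$ extending an element of $A\cap T$. This holds because every prefix of $\alpha$ is below some element of $D$, and every element of $D$ is a prefix of $\alpha$. Beyond that, the one subtle point is using the fibre $\pi^{-1}(\pi([s]))$ rather than $[s]$, which accounts for the dyadic double expansions.
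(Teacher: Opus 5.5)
Your proposal is correct and follows essentially the same route as the paper. Both arguments rest on the classification of directed sets (Lemma~\ref{lem:directed}), test $F\ll x$ against the singleton $\{x\}$ and the prefix chains $D_\alpha=\{\alpha\upharpoonright n:n\in\Nat\}$, and use the fact that only words lie below words. Your only departure is organizational: you first prove the uniform reduction ($A\ll x$ iff $x\in\UpSet A$ and every branch $\alpha$ with $x\le\pi(\alpha)$ lies in $U_{A\cap T}$) and then specialize, which folds the paper's separate converse cases for directed sets with a maximum into a single upward-closure argument.
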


\begin{proof}
	
	(1)  Suppose $F\ll r$, and let
	$\alpha\in \pi^{-1}(\{r\})$.  Consider the directed prefix chain
	\[
	D_\alpha=\{\alpha\upharpoonright n:n\in\Nat\}.
	\]
	Its supremum is $r$ by Lemma~\ref{lem:directed}.  Hence
	$D_\alpha\cap\UpSet F\ne\varnothing$, which means some member
	of $D_{\alpha}$ lies above a word $e\in F\cap 2^{<\omega}$.  Equivalently, $e\preccurlyeq\alpha$, and
	$\alpha\in U_{F\cap 2^{<\omega}}$.  This proves $\pi^{-1}(\{r\})\subseteq U_{F\cap 2^{<\omega}}$.
	
	Conversely, assume $\pi^{-1}(\{r\})\subseteq U_{F\cap 2^{<\omega}}$, and let $D$ be directed with
	$\bigvee D\in\UpSet r$.  Since $r$ is maximal, $\bigvee D=r$.  
	We consider two cases:
	\begin{itemize}
		\item [(i)] If $r\in D$, choose
		$\alpha\in\pi^{-1}(\{r\})$.  The assumption gives
		$\alpha\in U_{F\cap T}$, so there is $e\in F\cap T$ with
		$e\preccurlyeq\alpha$.  Then $e\le r$, and hence
		$r\in\UpSet F$.
		\item [(ii)] 
		If $r\notin D$, the classification lemma says that $D$ is an
		unbounded prefix chain whose union is some $\alpha\in \pi^{-1}(\{r\})\subseteq U_{F\cap 2^{<\omega}}$.  Since
		$\alpha\in U_{F\cap 2^{<\omega}}$, there exists some $e\in F\cap 2^{<\omega}$ that is a prefix of $\alpha$, and a sufficiently long
		element of $D$ lies above $e$.
	\end{itemize}
	In either case $D\cap\UpSet F\ne\varnothing$, hence
	$F\ll r$.
	
	\medskip
	(2)  If $F\ll s$, applying the definition to the
	singleton directed set $\{s\}$ gives $s\in\UpSet F$. Then there is $e\in F\cap 2^{<\omega}$ with $e\preccurlyeq s$.
	This shows that $s\in\UpSet(F\cap T)$.
	Next, let $\alpha\in \pi^{-1}(\pi([s]))$ and put
	$r=\pi(\alpha)$.  Then $s\le r$, so the prefix chain $D_\alpha = \{\alpha\upharpoonright n:n\in\mathbb N\}$ has supremum in
	$\UpSet s$.  It must meet $\UpSet F$, and again this can happen only above a word in
	$F\cap 2^{<\omega}$.  Thus $\alpha\in U_{F\cap 2^{<\omega}}$. 
	
	For the converse, assume the two conditions and take a directed subset $D\subseteq P$ with
	$\bigvee D\in\UpSet s$.  There are three cases.
	\begin{itemize}
		\item [(i)] If $\bigvee D=t\in T$, then $s\preccurlyeq t$.  The classification lemma gives
		$t\in D$ as a maximum.  Choose $e\in F\cap 2^{<\omega}$ with $e\preccurlyeq s$.  Then $e\le t$, hence
		$t\in\UpSet F$.
		\item [(ii)] If $\bigvee D=r\in\UnitInterval$ and $r\in D$, then from $s\leq \bigvee D$ it holds that
		$r\in\pi([s])$, so $\pi^{-1}(\{r\})\subseteq \pi^{-1}(\pi([s]))\subseteq U_{F\cap 2^{<\omega}}$.  For
		$\alpha\in \pi^{-1}(\{r\})$, choose $e\in F\cap 2^{<\omega}$ with $e\preccurlyeq\alpha$.  Then 
		$r=\pi(\alpha)\in\pi([e])$, and hence $e\le r$ by the
		definition of the order on $P$.
		\item [(iii)] If $\bigvee D=r\in\UnitInterval$ but $r\notin D$, then $D$ is an
		unbounded prefix chain.  Let $\alpha$ be its union.  We have
		$\pi(\alpha)=r\in\pi([s])$, so $\alpha\in \pi^{-1}(\pi([s]))\subseteq U_{F\cap 2^{<\omega}}$.  Hence some
		$e\in F\cap 2^{<\omega}$ is a prefix of $\alpha$, and a sufficiently long member of $D$ lies
		above $e$.
	\end{itemize}
	Every case gives $D\cap\UpSet F\ne\varnothing$.  Therefore $F\ll s$.
\end{proof}

\subsection{Quasicontinuity}

\begin{lemma}\label{lem:refine}
	Let $A$ be a compact set in the Cantor space $X=2^{\omega}$, and suppose
	$A\subseteq U_{E_1}\cap U_{E_2}$ for finite subsets $E_1,E_2\subseteq T$.  Then there is a
	finite $E_0\subseteq T$ such that
	\[
	A\subseteq U_{E_0}\quad \text{ and }\quad E_0\subseteq \UpSet E_1\cap\UpSet E_2.
	\]
\end{lemma}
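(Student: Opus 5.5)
The plan is to cover $A$ by small cylinders, each lying inside a cylinder from $E_1$ and one from $E_2$, and then use compactness to keep finitely many. Whenever $[t]\subseteq[e]$ we have $e\preccurlyeq t$, so such cylinders are indexed by words in $\UpSet E_1\cap\UpSet E_2$.

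The key observation is that two cylinders are either nested or disjoint. If $[e_1]\cap[e_2]\neq\varnothing$, then $e_1$ and $e_2$ are prefix-comparable. In that case $[e_1]\cap[e_2]=[t]$, where $t$ is the longer of the two words. Then $e_1\preccurlyeq t$ and $e_2\preccurlyeq t$, that is, $t\in\UpSet E_1\cap\UpSet E_2$; here $\UpSet$ is computed in $P$, and above a word $e$ it contains all words extending $e$.

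The construction runs as follows.
\begin{enumerate}
\item Fix $\alpha\in A$. Since $\alpha\in U_{E_1}\cap U_{E_2}$, choose $e_1\in E_1$ and $e_2\in E_2$ with $e_1\preccurlyeq\alpha$ and $e_2\preccurlyeq\alpha$.
\item Both are prefixes of $\alpha$, so they are comparable. Let $t_\alpha$ be the longer one. Then $\alpha\in[t_\alpha]\subseteq[e_1]\cap[e_2]$.
\item The cylinders $[t_\alpha]$, for $\alpha\in A$, form an open cover of $A$. Since $A$ is compact, finitely many of them suffice, say for $\alpha_1,\dots,\alpha_k$.
\item Put $E_0=\{t_{\alpha_1},\dots,t_{\alpha_k}\}$. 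Then $A\subseteq U_{E_0}$, and every $t_{\alpha_i}$ extends a word in $E_1$ and a word in $E_2$, so $E_0\subseteq\UpSet E_1\cap\UpSet E_2$.
\end{enumerate}

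There is no real obstacle. The only point needing care is that $t_\alpha$ is a prefix of $\alpha$ and extends both $e_1$ and $e_2$, which holds because any two prefixes of the same sequence are comparable. Compactness of $A$ is used only in the finite-subcover step. One could instead cover $A$ by $[\alpha\upharpoonright N]$ for a uniform large $N$, but the pointwise choice above avoids needing a Lebesgue-number argument.
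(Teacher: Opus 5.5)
Your proposal is correct and follows essentially the same route as the paper. Both proofs choose, for each $\alpha\in A$, prefixes from $E_1$ and $E_2$ (which are comparable), take a prefix of $\alpha$ extending both, and extract a finite subcover by compactness. Your choice of the longer of the two words is just a specific instance of the paper's ``sufficiently long prefix''.
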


\begin{proof}
	For each $\alpha\in A$, choose $e_i\in E_i$ with $e_i\preccurlyeq\alpha$ for $i\in\{1,2\}$.
	The two chosen prefixes are comparable.  Take a sufficiently long prefix
	$e_\alpha$ of $\alpha$ extending both of them.  The cylinders $[e_\alpha]$ cover
	$A$, and compactness gives a finite subcover.  Take $E_0$ to be the corresponding
	finite set of words.
\end{proof}

\begin{proposition}[Filteredness]\label{prop:filtered}
	For every $x\in P$, the family
	\[
	\{\UpSet F:F\ll x\}
	\]
	is filtered under inclusion.
\end{proposition}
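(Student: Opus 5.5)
Filteredness means two things: the family is nonempty, and any two members have a common lower bound in it under inclusion. That is, given $F_1\ll x$ and $F_2\ll x$, we need some $F_0\ll x$ with $\UpSet F_0\subseteq \UpSet F_1\cap\UpSet F_2$. The plan is to reduce everything to the Cantor-space condition of Lemma~\ref{lem:criterion} and then apply Lemma~\ref{lem:refine}. First I attach to $x$ a compact set $A_x\subseteq X$. For $x=r\in\UnitInterval$, I take $A_x=\pi^{-1}(\{r\})$. For $x=s\in T$, I take $A_x=C_s=\pi^{-1}(\pi([s]))$. The latter is compact because $\pi([s])$ is a continuous image of the compact cylinder $[s]$, hence closed in $\UnitInterval$, and its preimage is therefore closed in the compact space $X$.

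For nonemptiness, I would take $F=\{\varnothing\}$, the set containing only the empty word. The empty word lies below every element of $P$ and $U_{\{\varnothing\}}=X$, so Lemma~\ref{lem:criterion} gives $F\ll x$ in both cases.

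Now let $F_1,F_2\ll x$ and set $E_i=F_i\cap T$. By Lemma~\ref{lem:criterion}, $A_x\subseteq U_{E_1}\cap U_{E_2}$. Lemma~\ref{lem:refine} then gives a finite $E_0\subseteq \UpSet E_1\cap\UpSet E_2$ with $A_x\subseteq U_{E_0}$. Because $E_0\subseteq\UpSet E_1\cap\UpSet E_2$, we get $\UpSet E_0\subseteq \UpSet F_1\cap\UpSet F_2$.

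For $x=r$ maximal, $F_0=E_0$ already satisfies $F_0\ll r$ by Lemma~\ref{lem:criterion}(1), and we are done.

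The case $x=s\in T$ needs one more step, which I expect to be the only real obstacle. Criterion (2) also requires $s\in\UpSet F_0$, and the words produced by Lemma~\ref{lem:refine} may all be longer than $s$. I would fix this by setting $F_0=E_0\cup\{s\}$. Then $s\in\UpSet F_0$, and $U_{F_0}\supseteq U_{E_0}\supseteq C_s$, so $F_0\ll s$. It remains to check that $s\in\UpSet F_1\cap\UpSet F_2$. This holds because Lemma~\ref{lem:criterion}(2) applied to $F_i\ll s$ gives $s\in\UpSet(F_i\cap T)$. Hence $\UpSet F_0=\UpSet E_0\cup\UpSet s\subseteq\UpSet F_1\cap\UpSet F_2$, which completes the argument.
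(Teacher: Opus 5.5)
Your proof is correct and takes the same approach as the paper: the same compact set $C_x$, the same appeal to Lemma~\ref{lem:refine}, and the same repair $E_0\cup\{s\}$ in the case $x=s\in T$. Your explicit checks that the family is nonempty (the empty word lies below everything and covers $X$) and that $C_s$ is compact are small details the paper leaves implicit.
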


\begin{proof}
	Suppose $F_1,F_2\ll x$, and put
	\[
	E_i=F_i\cap T\qquad(i=1,2).
	\]
	Define
	\[
	C_x=
	\begin{cases}
		\pi^{-1}(\{r\}),&x=r\in\UnitInterval,\\
		\pi^{-1}(\pi([s])),&x=s\in T.
	\end{cases}
	\]
	Every $C_x$ is a nonempty compact subset of $X$.
	By Lemma~\ref{lem:criterion},
	\[
	C_x\subseteq U_{E_1}\cap U_{E_2}.
	\]
	Apply Lemma~\ref{lem:refine} to obtain a finite subset $E_0\subseteq T$
	such that
	\[
	C_x\subseteq U_{E_0}
	\qquad\text{and}\qquad
	E_0\subseteq\UpSet E_1\cap\UpSet E_2.
	\]
	
	We consider two cases:
	\begin{itemize}
		\item [(i)] If $x=r\in\UnitInterval$, take $E=E_0$.  Since
		$\pi^{-1}(\{r\})=C_x\subseteq U_E$, Lemma~\ref{lem:criterion}
		gives $E\ll r$.  In addition, since each $e\in E_0$ lies above a word of $E_1$
		and above a word of $E_2$, it follows that
		\[
		\UpSet E\subseteq\UpSet F_1\cap\UpSet F_2.
		\]
		\item [(ii)] If $x=s$ is a finite word, let $E=E_0\cup\{s\}$.  The set $E_0$ covers $\pi^{-1}(\pi([s]))=C_s$ and
		$s\preccurlyeq s$, so $E\ll s$ by Lemma~\ref{lem:criterion}.  In addition,
		$F_i\ll s$ implies that a word of $E_i$ is a prefix of $s$.  Hence
		$s\in\UpSet F_i$, and the same containment
		\[
		\UpSet E\subseteq\UpSet F_1\cap\UpSet F_2
		\]
		holds.
	\end{itemize}
	In either case, $E\subseteq P$ is finite, $E\ll x$, and
	$\UpSet E\subseteq\UpSet F_1\cap\UpSet F_2$. 
\end{proof}

We next establish the intersection condition.  We use two elementary cylinder
separation facts.

\begin{lemma} \label{lem:separate}
	\begin{itemize}
		\item [\rm (1)] If $A,B\subseteq X$ are disjoint compact sets, then there is a finite subset
		$E\subseteq T$ such that
		\[
		A\subseteq U_E\qquad\text{and}\qquad U_E\cap B=\varnothing.
		\]
		\item [\rm (2)] If $A\subseteq X$ is compact and $t\in T$, then there is a finite
		$E_t\subseteq T$ such that
		\[
		A\subseteq U_{E_t}
		\qquad\text{and}\qquad t\notin \UpSet E_t.
		\]
	\end{itemize}
\end{lemma}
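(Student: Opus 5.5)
For part (1) the plan is the standard compactness argument in the Cantor space, using that cylinders are clopen and form a basis. Since $A$ and $B$ are disjoint and $B$ is closed, every $\alpha\in A$ lies in the open set $X\setminus B$. Because $\{[s]:s\in T\}$ is a basis, we can choose a word $e_\alpha\preccurlyeq\alpha$ with $[e_\alpha]\cap B=\varnothing$. Concretely, if this failed for every prefix, then the sets $[\alpha\upharpoonright n]\cap B$ would be nonempty, closed and nested. Compactness of $B$ would then give a point of $B$ in $\bigcap_n[\alpha\upharpoonright n]=\{\alpha\}$, which is a contradiction. The cylinders $[e_\alpha]$, $\alpha\in A$, form an open cover of the compact set $A$. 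Taking a finite subcover indexed by $\alpha_1,\dots,\alpha_k$, we set $E=\{e_{\alpha_1},\dots,e_{\alpha_k}\}$. Then $A\subseteq U_E$, and $U_E\cap B=\varnothing$ because each cylinder in $E$ misses $B$.

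For part (2), the condition $t\notin\UpSet E_t$ means that no word of $E_t$ is a prefix of $t$. The idea is to cover $A$ by cylinders whose words are longer than $t$. Set $n=|t|+1$ and let
\[
E_t=\{\alpha\upharpoonright n:\alpha\in A\}\subseteq 2^n .
\]
This set is finite because $2^n$ is finite. Every $\alpha\in A$ lies in $[\alpha\upharpoonright n]$, so $A\subseteq U_{E_t}$. Every $e\in E_t$ has length $|t|+1>|t|$, so $e\not\preccurlyeq t$, and hence $t\notin\UpSet E_t$. Here the upper set is taken in $P$, and for the word $t\in T$ it consists only of prefix comparisons. Compactness of $A$ is not even needed in this part.

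I do not expect a serious obstacle. The only point requiring care is in (1): a basic neighbourhood of $\alpha$ avoiding $B$ must be chosen as a cylinder $[e_\alpha]$ with $e_\alpha$ a prefix of $\alpha$. This is exactly where one uses that $B$ is closed, which holds because $B$ is compact in a Hausdorff space, and that the cylinders around $\alpha$ form a neighbourhood base.
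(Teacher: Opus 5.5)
Your proof is correct. Part (1) follows the same argument as the paper: choose a cylinder $[e_\alpha]\ni\alpha$ missing $B$ for each $\alpha\in A$, then pass to a finite subcover. Your nested-intersection argument for why such a cylinder exists is a valid, slightly more explicit replacement for the paper's appeal to closedness of $B$ and the cylinder basis.

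Part (2) takes a different route. The paper chooses, for each $\alpha\in A$, a prefix $\alpha\upharpoonright N_\alpha$ with $N_\alpha>|t|$. It then uses compactness of $A$ to extract a finite subcover, and treats $A=\varnothing$ as a separate case. You instead take all prefixes at the single level $|t|+1$, so finiteness follows immediately from $2^{|t|+1}$ being finite.

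Your version is more elementary. It also shows that the compactness hypothesis in (2) is unnecessary: every subset of $X$, including $\varnothing$, is covered this way. The paper's version gains nothing extra. It only repeats the compactness pattern of (1) and Lemma~\ref{lem:refine}. In the later use in Theorem~\ref{thm:quasicontinuous}, any finite $E_t$ with the two stated properties works, so your construction can be substituted directly.
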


\begin{proof}
	(1) Since the compact set $B$ is closed in the Hausdorff space $X$,  
	for every $\alpha\in A$, the open set $X\setminus B$ contains a cylinder
	$[s_\alpha]$ such that
	\[
	\alpha\in[s_\alpha]\subseteq X\setminus B.
	\]
	The family $\{[s_\alpha]:\alpha\in A\}$ of open sets covers $A$.  By compactness, choose
	$\alpha_1,\ldots,\alpha_m\in A$ such that
	\[
	A\subseteq\bigcup_{j=1}^m[s_{\alpha_j}].
	\]
	Then $E=\{s_{\alpha_1},\ldots,s_{\alpha_m}\}$ satisfies
	\[
	A\subseteq U_E\qquad\text{and}\qquad U_E\cap B=\varnothing.
	\]
	
	(2) If $A=\varnothing$, take $E_t=\varnothing$ and we are done.  Hence assume that
	$A\ne\varnothing$.  For each
	$\alpha\in A$, choose $N_\alpha>|t|$ and put
	$s_\alpha=\alpha\upharpoonright N_\alpha$.  The cylinders $[s_\alpha]$
	cover $A$, so compactness yields $\alpha_1,\ldots,\alpha_m\in A$ such that
	\[
	A\subseteq\bigcup_{j=1}^m[s_{\alpha_j}].
	\]
	Let $E_t=\{s_{\alpha_1},\ldots,s_{\alpha_m}\}$.  Then
	$A\subseteq U_{E_t}$.  Since every $e\in E_t$ has length strictly greater
	than $|t|$, no such $e$ can be a prefix of $t$.  Therefore
	$t\notin\UpSet E_t$.
\end{proof}

\begin{theorem}\label{thm:quasicontinuous}
	$P$ is a quasicontinuous dcpo.
\end{theorem}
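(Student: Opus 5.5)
The proof has two parts. $P$ is a dcpo by Proposition~\ref{prop:dcpo}, and filteredness of $\{\UpSet F:F\ll x\}$ is Proposition~\ref{prop:filtered}. It remains to prove the intersection condition $\UpSet x=\bigcap_{F\ll x}\UpSet F$ for every $x\in P$. The inclusion $\subseteq$ is automatic: taking the singleton directed set $\{x\}$ in the definition of $F\ll x$ gives $x\in\UpSet F$, hence $\UpSet x\subseteq\UpSet F$. For the reverse inclusion, fix $y\notin\UpSet x$. I will exhibit a finite $F\ll x$ with $y\notin\UpSet F$. Throughout, I will work with the compact set $C_x$ from the proof of Proposition~\ref{prop:filtered} and use the criterion of Lemma~\ref{lem:criterion}.

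\emph{Case $x=r\in\UnitInterval$.} If $y=t\in T$, apply Lemma~\ref{lem:separate}(2) to $A=\pi^{-1}(\{r\})$ and $t$. This gives $E_t$ with $\pi^{-1}(\{r\})\subseteq U_{E_t}$ and $t\notin\UpSet E_t$, and $E_t\ll r$ by Lemma~\ref{lem:criterion}(1). If $y=q\in\UnitInterval$ with $q\neq r$, the fibres $\pi^{-1}(\{r\})$ and $\pi^{-1}(\{q\})$ are disjoint compact sets. Lemma~\ref{lem:separate}(1) gives $E$ with $\pi^{-1}(\{r\})\subseteq U_E$ and $U_E\cap\pi^{-1}(\{q\})=\varnothing$. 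Then $E\ll r$. Moreover, if $e\le q$ for some $e\in E$, then $q\in\pi([e])$, so some $\beta\in[e]$ has $\pi(\beta)=q$, contradicting $U_E\cap\pi^{-1}(\{q\})=\varnothing$. Hence $q\notin\UpSet E$.

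\emph{Case $x=s\in T$.} If $y=t\in T$ with $s\not\preccurlyeq t$, take $E_t$ from Lemma~\ref{lem:separate}(2) for $A=C_s$, and put $F=E_t\cup\{s\}$. Then $F\ll s$ by Lemma~\ref{lem:criterion}(2). Also $t\notin\UpSet F$, since $s\not\preccurlyeq t$ and $t\notin\UpSet E_t$. If $y=q\in\UnitInterval$ with $s\not\le q$, then $q\notin\pi([s])$, so $C_s=\pi^{-1}(\pi([s]))$ is disjoint from $\pi^{-1}(\{q\})$. Both sets are compact ($\pi([s])$ is a closed interval and $\pi$ is continuous). Separate them by $E$ using Lemma~\ref{lem:separate}(1), and put $F=E\cup\{s\}$. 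As before, $q\notin\UpSet E$, and $q\notin\UpSet s$ by assumption. Hence $F\ll s$ and $y\notin\UpSet F$.

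The step needing the most care is translating the separation of compact subsets of $X$ into the order statement $q\notin\UpSet E$. The key point is that $e\le q$ means that \emph{some} expansion of $q$ extends $e$, not necessarily a prescribed one. Separating against the entire fibre $\pi^{-1}(\{q\})$ handles this, and working with $C_s$ rather than $[s]$ handles dyadic values that have two binary expansions. Apart from that, the argument is a straightforward case check.
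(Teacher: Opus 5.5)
Your proof is correct and follows essentially the same route as the paper. It uses the same four-way case split on whether $x$ and $y$ lie in $T$ or $\UnitInterval$, applies Lemma~\ref{lem:separate}(1) against the whole fibre $\pi^{-1}(\{q\})$ or Lemma~\ref{lem:separate}(2) against $t$, adjoins $s$ when $x=s\in T$, and invokes Lemma~\ref{lem:criterion} to certify $F\ll x$; your explicit argument that $U_E\cap\pi^{-1}(\{q\})=\varnothing$ forces $q\notin\UpSet E$ simply spells out a step the paper states in one line.
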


\begin{proof}
	By Proposition~\ref{prop:filtered}, it remains to prove that, for every
	$x\in P$,
	\[
	\UpSet x=\bigcap_{F\ll x}\UpSet F.
	\]
	The inclusion
	\[
	\UpSet x\subseteq\bigcap_{F\ll x}\UpSet F
	\]
	is immediate.
	For the converse, let $y\notin\UpSet x$.  We construct a finite set $E\subseteq T$
	such that $E\ll x$ and $y\notin\UpSet E$.
	
	\medskip
	\noindent\textbf{Case 1: $x=r\in\UnitInterval$.}
	\begin{itemize}
		\item [(i)] If $y=q\in\UnitInterval$, then $q\ne r$ (because $y\notin \UpSet x$), so $\pi^{-1}(\{r\})$ and $\pi^{-1}(\{q\})$ are disjoint compact
		sets.  By Lemma~\ref{lem:separate}(1), choose finite $E\subseteq T$ with
		$\pi^{-1}(\{r\})\subseteq U_E$ and $U_E\cap \pi^{-1}(\{q\})=\varnothing$.  The first property gives
		$E\ll r$ by Lemma~\ref{lem:criterion}, while the second says that no $e\in E$ lies below $q$.  Thus
		$q\notin\UpSet E$.
		
		\item [(ii)] If $y=t\in T$, apply Lemma~\ref{lem:separate}(2) with
		$A=\pi^{-1}(\{r\})$.  We obtain a finite set $E=E_t\subseteq T$
		with $\pi^{-1}(\{r\})\subseteq U_E$ and $t\notin\UpSet E$.
		Lemma~\ref{lem:criterion} gives $E\ll r$.
	\end{itemize}

	\medskip
	\noindent\textbf{Case 2: $x=s\in T$.}
	\begin{itemize}
		\item [(i)] If $y=q\in\UnitInterval$, then $q\notin\UpSet s$ means
		$q\notin\pi([s])$.  Hence $\pi^{-1}(\pi([s]))\cap \pi^{-1}(\{q\})=\varnothing$.  By Lemma~\ref{lem:separate}(1), there exists a finite $E_0\subseteq T$ such that  $\pi^{-1}(\pi([s]))\subseteq U_{E_0}$ and 
		$U_{E_0}\cap \pi^{-1}(\{q\})=\varnothing$.  Let $E=E_0\cup\{s\}$.  Since
		$q\notin\pi([s])$, we also have $[s]\cap \pi^{-1}(\{q\})=\varnothing$.  Therefore
		$q\notin\UpSet E$.  On the other hand, $\pi^{-1}(\pi([s]))\subseteq U_E$ and $s\in E$, so by
		Lemma~\ref{lem:criterion}, we have $E\ll s$.
		
		\item [(ii)] If $y=t\in T$, then $t\notin\UpSet s$ means
		$s\not\preccurlyeq t$.  Put $A=\pi^{-1}(\pi([s]))$.  By
		Lemma~\ref{lem:separate}(2), there is a finite set $E_t\subseteq T$
		such that
		\[
		A\subseteq U_{E_t}
		\qquad\text{and}\qquad t\notin \UpSet E_t,
		\]
		and let $E=E_t\cup\{s\}$.  No member of $E_t$ is a prefix of $t$, and $s$ is not a prefix of
		$t$.  Thus $t\notin\UpSet E$. By Lemma~\ref{lem:criterion}, 
		$E\ll s$.
	\end{itemize}
	
	In all cases, an element outside $\UpSet x$ is excluded by one $\UpSet E$ with
	$E\ll x$.  This proves the reverse inclusion.
\end{proof}

\subsection{Failure of quasialgebraicity}

\begin{lemma}\label{lem:saturation}
	Let $F\subseteq P$ be a finite set and $E=F\cap T$.  If $F\ll F$ and
	$\UpSet F\ne\varnothing$, then
	\[
	E\ne\varnothing
	\qquad\text{and}\qquad
	\pi^{-1}(\pi(U_E))=U_E.
	\]
	Therefore, $\pi(U_E)$ is an open set in $[0,1]$.
\end{lemma}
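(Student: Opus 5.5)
We use the meaning of $F\ll F$: for every directed $D$ with $\bigvee D\in\UpSet F$, the set $D$ meets $\UpSet F$. Throughout we test this against the prefix chains $D_\alpha=\{\alpha\upharpoonright n:n\in\Nat\}$, whose suprema are $\pi(\alpha)$ by Lemma~\ref{lem:directed}. A chain $D_\alpha$ consists of words, and a word lies above a member of $F$ only if it lies above a member of $E=F\cap T$, since points of $\UnitInterval$ are maximal and not below any word. So $D_\alpha$ meets $\UpSet F$ exactly when $\alpha\in U_E$.

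\emph{Step 1: $E\neq\varnothing$.} Since $\UpSet F\ne\varnothing$, the set $F$ itself is nonempty. If $F\subseteq\UnitInterval$, pick $r\in F$ and some $\alpha\in\pi^{-1}(\{r\})$. Then $\bigvee D_\alpha=r\in\UpSet F$, but $D_\alpha\subseteq T$ cannot meet $\UpSet F=F$. This contradicts $F\ll F$, so $E\neq\varnothing$.

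\emph{Step 2: saturation.} The inclusion $U_E\subseteq\pi^{-1}(\pi(U_E))$ is trivial. For the reverse, let $\beta\in X$ with $\pi(\beta)=\pi(\alpha)$ for some $\alpha\in U_E$, say $e\preccurlyeq\alpha$ with $e\in E$. Then $\pi(\beta)=\pi(\alpha)\in\pi([e])$, so $e\le\pi(\beta)$ in $P$, and hence $\bigvee D_\beta=\pi(\beta)\in\UpSet F$. By $F\ll F$, the chain $D_\beta$ meets $\UpSet F$, so by the remark above $\beta\in U_E$. This proves $\pi^{-1}(\pi(U_E))=U_E$.

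\emph{Step 3: openness.} $U_E$ is a finite union of cylinders, hence clopen. By Step 2, $\pi^{-1}(\pi(U_E))=U_E$ is open, and since $\pi$ is a quotient map by Lemma~\ref{lem:quotient}, $\pi(U_E)$ is open in $\UnitInterval$.

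The only delicate point is the bookkeeping in Step 2: we must check that membership of $\pi(\beta)$ in $\UpSet F$ really comes from a word of $E$ and not from a point of $F\cap\UnitInterval$. This is harmless, since we produced the word $e$ directly. We should also verify the identity $\pi^{-1}(\pi(U_E))=U_E$ itself rather than just $U_E\subseteq\pi^{-1}(\pi(U_E))$, because Lemma~\ref{lem:quotient} is applied to the set $\pi^{-1}(\pi(U_E))$ and needs it to be open.
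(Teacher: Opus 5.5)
Your proof is correct and follows essentially the same route as the paper. You test $F\ll F$ against the prefix chains $D_\alpha$, derive $E\neq\varnothing$ by the same contradiction, prove $\pi$-saturation of $U_E$ (directly rather than by contradiction, which is immaterial), and conclude openness of $\pi(U_E)$ from the quotient property of $\pi$.
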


\begin{proof}
	Suppose first that $E=F\cap T=\varnothing$.  Then $F\subseteq\UnitInterval$.
	Since $\UpSet F\ne\varnothing$, choose $m\in F$.  For any
	$\alpha\in\pi^{-1}(\{m\})$, the prefix chain
	$
	D_\alpha=\{\alpha\upharpoonright n:n\in\mathbb{N}\}
	$
	has supremum $m\in\UpSet F$.  On the other hand, $D_\alpha\subseteq T$,
	whereas every element of $F$ belongs to $\UnitInterval$.  Hence
	\[
	D_\alpha\cap\UpSet F=\varnothing,
	\]
	contradicting $F\ll F$.  Therefore $E\ne\varnothing$.
	
	We next prove the following claim:
	\[
	\alpha\in U_E\ \text{and}\ \pi(\alpha)=\pi(\beta)
	\quad\Longrightarrow\quad
	\beta\in U_E.
	\]
	Suppose, to the contrary, that $\beta\notin U_E$.  Since $\alpha\in U_E$,
	there is $e\in E$ such that $e\preccurlyeq\alpha$.  Put
	\[
	r=\pi(\alpha)=\pi(\beta).
	\]
	Then $e\leq r$, so $r\in\UpSet F$.  The prefix chain
	$
	D_\beta=\{\beta\upharpoonright n:n\in\mathbb{N}\}
	$
	has supremum $r$.  Since $\beta\notin U_E$, no member of $D_\beta$ lies
	above an element of $E$.  Moreover, $D_\beta\subseteq T$, so no member of
	$D_\beta$ lies above an element of $F\cap\UnitInterval$.  Consequently,
	$
	D_\beta\cap\UpSet F=\varnothing,
	$
	again contradicting $F\ll F$.  This proves the claim.
	
	Now let $\beta\in\pi^{-1}(\pi(U_E))$.  Then there is some
	$\alpha\in U_E$ such that
	$
	\pi(\alpha)=\pi(\beta).
	$
	The claim gives $\beta\in U_E$.  Therefore
	$
	\pi^{-1}(\pi(U_E))\subseteq U_E.
	$
	The reverse inclusion is immediate, and hence
	$
	\pi^{-1}(\pi(U_E))=U_E.
	$

	Since $U_E$ is open in $X$ and $\pi$ is a quotient map
	(Lemma~\ref{lem:quotient}), it follows that $\pi(U_E)$ is open in
	$\UnitInterval$.
\end{proof}
\begin{proposition}
	\label{prop:allleaves}
	If $F\ll F$ and $\UpSet F\ne\varnothing$, then
	$[0,1]\subseteq\UpSet F.
	$
\end{proposition}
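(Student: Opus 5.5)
Set $E=F\cap T$; by Lemma~\ref{lem:saturation}, $E\neq\varnothing$ and $V:=\pi(U_E)$ is open in $\UnitInterval$. My plan is to show that $V$ is also closed. Then connectedness of $\UnitInterval$ and $V\neq\varnothing$ give $V=\UnitInterval$. Every $r\in V$ has the form $r=\pi(\alpha)$ with some $e\in E$ satisfying $e\preccurlyeq\alpha$, so $e\le r$ and $r\in\UpSet F$. Hence $\UnitInterval=V\subseteq\UpSet F$.

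Closedness is immediate: $U_E$ is a finite union of cylinders, hence clopen and compact in $X$. By continuity of $\pi$ (Lemma~\ref{lem:quotient}), $\pi(U_E)$ is compact in the Hausdorff space $\UnitInterval$, and therefore closed. Nonemptiness follows from $E\neq\varnothing$, since each cylinder $[e]$ is nonempty and so $U_E\ne\varnothing$.

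There is really no obstacle here beyond what Lemma~\ref{lem:saturation} already supplies. The saturation $\pi^{-1}(\pi(U_E))=U_E$ is what makes $V$ open, and it is where $F\ll F$ is used. It prevents the set from being, say, $\pi([(1)])=[\tfrac12,1]$, whose left endpoint $\tfrac12$ is approached by the expansion $0111\cdots$ lying outside $[(1)]$. The only point to check is that connectedness of the unit interval may be invoked, which is standard.
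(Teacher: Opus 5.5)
Your proof is correct and follows essentially the paper's route. Openness of $\pi(U_E)$ comes from saturation (Lemma~\ref{lem:saturation}), closedness from compactness, and then connectedness of $[0,1]$ gives $\pi(U_E)=[0,1]$. The only cosmetic difference is the last step: you read off $r\in\UpSet F$ directly from $r=\pi(\alpha)$ with $\alpha\in U_E$, whereas the paper first concludes $U_E=X$ and then picks a preimage of $r$.
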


\begin{proof}
	By Lemma~\ref{lem:saturation}, $U_E$ is a nonempty clopen $\pi$-saturated subset of
	$X$.  The continuous surjection
	\[
	\pi:X\twoheadrightarrow[0,1]
	\]
	is a quotient map (Lemma~\ref{lem:quotient}).  By Lemma~\ref{lem:saturation}, 
	\[
	\pi^{-1}(\pi(U_E))=U_E.
	\]
	As $U_E$ is open, the quotient-map property implies that $\pi(U_E)$ is open in
	$[0,1]$.  It is also compact and hence closed.  Connectedness of $[0,1]$ now gives
	\[
	\pi(U_E)=[0,1].
	\]
	Thus, $U_E=\pi^{-1}(\pi(U_E))=\pi^{-1}([0,1])=X=2^{\omega}$.
	
	Let $r\in[0,1]$ and choose $\alpha\in \pi^{-1}(\{r\})$.  Since $\alpha\in U_E=X$, some
	$e\in E$ is a prefix of $\alpha$.  Thus $e\le r$, so $r\in\UpSet F$.
\end{proof}

\begin{theorem}\label{thm:notquasialgebraic}
	$P$ is not quasialgebraic.
\end{theorem}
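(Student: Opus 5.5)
We argue by contradiction, using Proposition~\ref{prop:allleaves} as the key tool. Suppose $P$ were quasialgebraic. Pick any point, say $x=0\in\UnitInterval$, or better a word such as $s=(0)\in T$. By quasialgebraicity,
\[
\UpSet x=\bigcap\{\UpSet F: F\subseteq P\text{ finite},\ x\in\UpSet F,\ F\ll F\}.
\]
Each $F$ in this family satisfies $F\ll F$ and $\UpSet F\ni x$, so $\UpSet F\ne\varnothing$. Proposition~\ref{prop:allleaves} then gives $[0,1]\subseteq\UpSet F$. Intersecting over the family, we get $[0,1]\subseteq\UpSet x$.

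We now choose $x$ so that this inclusion fails. Take $x=(0)\in T$. Then
\[
\UpSet x\cap\UnitInterval=\pi([(0)])=[0,\tfrac12],
\]
so $1\notin\UpSet x$, which is a contradiction. Alternatively, take $x=r\in\UnitInterval$: there $\UpSet r=\{r\}$, which cannot contain all of $[0,1]$. Either choice works, and the choice $x=0$ is the simplest. If the family is empty for some $x$, its intersection is all of $P$, and this again differs from $\UpSet x$ (for instance, it contains $1\neq 0$). So quasialgebraicity fails in either case.

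The main work is already carried out by Lemma~\ref{lem:saturation} and Proposition~\ref{prop:allleaves}. The only care needed here is the convention that an intersection over an empty family equals $P$, together with checking that $x\in\UpSet F$ guarantees $\UpSet F\neq\varnothing$. Both points are immediate.

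Non-algebraicity, also asserted in Theorem~\ref{thm:counterexample}, follows as well: every algebraic dcpo is quasialgebraic. More directly, no compact elements lie below $r\in\UnitInterval$ except words, and $\{r\}\ll r$ fails.
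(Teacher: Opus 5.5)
Your argument is correct and is essentially the paper's proof: both apply Proposition~\ref{prop:allleaves} to every finite $F$ with $F\ll F$ and $x\in\UpSet F$, which forces all of $[0,1]$ (in particular the point $1$) into the intersection, so the intersection cannot equal $\UpSet 0=\{0\}$. Your closing ``more directly'' remark on non-algebraicity is incomplete as stated: you would also need that the compact words below $r$ cannot have supremum $r$ (in fact only the empty word is compact). That remark is not needed for this statement, however.
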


\begin{proof}
	Fix two distinct leaves, say $r=0$ and $q=1$.  If $F\ll F$ and
	$r\in\UpSet F$, then $\UpSet F\ne\varnothing$, so
	Proposition~\ref{prop:allleaves} implies $q\in\UpSet F$.  Hence
	\[
	q\in\bigcap\{\UpSet F:r\in\UpSet F,\ F\ll F\}.
	\]
	But $q\notin\UpSet r=\{r\}$.  Thus the intersection clause in the definition of
	quasialgebraicity fails at $r$.
\end{proof}

\subsection{Negative answers to the two problems}

\begin{lemma}\label{lem:chains}
	Every chain in $P$ is order-isomorphic to a suborder of $\omega+1$.
	In particular, every chain in $P$ is countable.
\end{lemma}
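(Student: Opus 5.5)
Let $C\subseteq P$ be a chain. The plan is to split $C$ into its part in $T$ and its part in $\UnitInterval$ and to control each piece separately. Since the points of $\UnitInterval$ are pairwise incomparable, $C\cap\UnitInterval$ has at most one element. If such an element $r$ exists, it is the maximum of $C$: every other element of $C$ is comparable with $r$, and no element lies strictly above the maximal element $r$.

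Next I treat $C\cap T$. Any two elements are comparable in the prefix order, so they form a prefix chain. Distinct comparable words have distinct lengths, so the length map $s\mapsto|s|$ is a strictly increasing injection from $C\cap T$ into $\omega$. Hence $C\cap T$ is order-isomorphic to a subset of $\omega$ under its usual order, that is, to a finite ordinal or to $\omega$ itself.

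Combining the two pieces, I define $\varphi:C\to\omega+1$ by $\varphi(s)=|s|$ for $s\in C\cap T$ and $\varphi(r)=\omega$ for the possible element $r\in C\cap\UnitInterval$. This map is strictly monotone: on $T$ this was shown above, and the element $r$ lies above every word of $C$ and is sent to the top element $\omega$. A strictly monotone map between chains is an order embedding, so $C$ is isomorphic to its image, which is a suborder of $\omega+1$. Countability follows immediately, since $\omega+1$ is countable.

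There is no serious obstacle here. The only point needing care is the justification that a real $r\in C$ sits on top. This uses the maximality of points of $\UnitInterval$ (Proposition on $\operatorname{Max}(P)$), not merely their mutual incomparability.
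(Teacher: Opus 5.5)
Your proposal is correct and follows the paper's argument: the chain contains at most one real, which is its top element by maximality, and the length map embeds the word part into $\omega$. Your explicit map $\varphi:C\to\omega+1$ simply spells out the paper's final ``consequently'' step.
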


\begin{proof}
	Let $C$ be a chain in $P$.  Since distinct elements of $\UnitInterval$ are
	incomparable, $C$ contains at most one element of $\UnitInterval$.  The
	set $C\cap T$ is a prefix chain, and the length map
	\[
	        s\longmapsto |s|
	\]
	is a strict order embedding of $C\cap T$ into $\omega$.  If $C$ contains a
	point of $\UnitInterval$, that point is the largest element of $C$.
	Consequently, $C$ is order-isomorphic to a suborder of $\omega+1$.
\end{proof}

\begin{theorem}[Negative answer to Problem 1.15]
	$P$ contains no sub-dcpo isomorphic to $[0,1]$.
\end{theorem}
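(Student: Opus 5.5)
The plan is to argue by cardinality of chains, using Lemma~\ref{lem:chains}. Suppose, toward a contradiction, that $Q\subseteq P$ is a sub-dcpo and that $\varphi:\Unit\to Q$ is an order isomorphism (with $Q$ carrying the order inherited from $P$). Since $\Unit$ with its usual order is a chain, its image $\varphi(\Unit)=Q$ is a chain in $P$. This is because $\varphi$ preserves and reflects order, so any two elements of $Q$ are comparable in $P$.

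Next I will invoke Lemma~\ref{lem:chains}. It says every chain in $P$ is order-isomorphic to a suborder of $\omega+1$, and in particular is countable. Hence $Q$ is countable. But $\varphi$ is a bijection from $\Unit$ onto $Q$, and $\Unit$ has cardinality $2^{\aleph_0}$, which is a contradiction. As an alternative, order-theoretic way to reach the same contradiction: $\Unit$ is densely ordered, whereas every suborder of $\omega+1$ is well-ordered. So $\Unit$ would embed into a well-order, which is impossible since, for example, the subset $\{1/n : n\geq 1\}$ has no least element. Either route suffices, and I would present the cardinality one, mentioning the well-order argument as a check.

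Notice that the sub-dcpo requirement, that directed suprema computed in $Q$ agree with those in $P$, is not even needed. The argument rules out any order-embedding of $\Unit$ into $P$, which is a stronger conclusion. I expect no real obstacle. The only point needing care is that ``isomorphic to $\Unit$'' refers to the induced order on the subset, so that chains map to chains. That is exactly what makes Lemma~\ref{lem:chains} applicable.
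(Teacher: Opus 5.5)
Your proposal is correct and is essentially the paper's proof. The paper likewise notes that any subposet isomorphic to $[0,1]$ would be an uncountable chain, contradicting Lemma~\ref{lem:chains}. It also observes, as you do, that the sub-dcpo condition is not needed.
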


\begin{proof}
	In fact, $P$ contains no subposet isomorphic to $[0,1]$.  Such a subposet would be an
	uncountable chain, contradicting Lemma~\ref{lem:chains}.
\end{proof}

\begin{theorem}[Negative answer to Problem 1.14]
	The unit interval $[0,1]$ is not a Scott-continuous retract of $P$.
\end{theorem}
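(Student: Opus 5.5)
Suppose, for a contradiction, that $r:P\to\UnitInterval$ and $e:\UnitInterval\to P$ are Scott-continuous with $r\circ e=\operatorname{id}_{\UnitInterval}$. The plan is to show that $e$ would be an order embedding of $\UnitInterval$ into $P$, so that $e(\UnitInterval)$ is an uncountable chain; this contradicts Lemma~\ref{lem:chains}. In effect, this is the retract-implies-subobject direction of Proposition~\ref{prop:equivalence}, specialised to $P$, with the chain lemma doing the real work.

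\textbf{Step 1: $e$ is a monotone injection.} Scott-continuous maps are monotone, so $a\le b$ in $\UnitInterval$ gives $e(a)\le e(b)$. Since $r\circ e=\operatorname{id}$, $e$ has a left inverse and is therefore injective.

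\textbf{Step 2: $e$ reflects order.} Suppose $e(a)\le e(b)$. Applying the monotone map $r$ gives $a=r(e(a))\le r(e(b))=b$. Hence $e$ is an order embedding. Since $\UnitInterval$ is totally ordered, its image
\[
C=e(\UnitInterval)\subseteq P
\]
is a chain order-isomorphic to $\UnitInterval$, and in particular it is uncountable.

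\textbf{Step 3: contradiction.} By Lemma~\ref{lem:chains}, every chain in $P$ is countable, which contradicts Step~2. Therefore no such retraction pair exists. Concretely, $C$ would contain at most one point of $\UnitInterval$, and its remaining points would be prefix-comparable words of pairwise distinct lengths, so there could be only countably many of them.

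No step here is a real obstacle. The only point requiring care is that monotonicity of $r$ is used in Step~2 to reflect the order. One should also note that Scott continuity of $e$ is not needed beyond monotonicity, so the argument actually shows that $\UnitInterval$ is not even a monotone retract of $P$. The same chain argument also recovers the previous theorem.
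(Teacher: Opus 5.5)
Your proposal is correct and follows essentially the same argument as the paper: injectivity of $e$ from the left inverse plus monotonicity from Scott continuity makes $e(\UnitInterval)$ an uncountable chain, which contradicts Lemma~\ref{lem:chains}. Your Step~2 (order reflection via $r$) is harmless but not needed, since a monotone injection of a totally ordered uncountable set already has an uncountable chain as its image; the paper omits that step.
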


\begin{proof}
	Suppose that Scott-continuous maps
	\[
	f:P\longrightarrow[0,1],
	\qquad
	g:[0,1]\longrightarrow P
	\]
	satisfy $f\circ g=\operatorname{id}_{[0,1]}$.  Then $g$ is injective.  Scott
	continuity implies monotonicity, so $g([0,1])$ is an uncountable chain in $P$.
	This contradicts Lemma~\ref{lem:chains}.
\end{proof}

\begin{corollary}
	Problems~1.14 and~1.15 both have negative answers.
\end{corollary}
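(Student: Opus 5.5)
The corollary is an assembly step: it combines the two preceding theorems with the structural results already proved for $P$. The plan is to exhibit $P=T\mathbin{\dot\cup}\UnitInterval$ as a single domain that satisfies the hypotheses of both Problem~1.14 and Problem~1.15 but not their conclusions.

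\textbf{Hypotheses.} By Proposition~\ref{prop:dcpo}, $P$ is a dcpo. By Theorem~\ref{thm:quasicontinuous}, it is quasicontinuous, so it is a quasicontinuous domain. By Theorem~\ref{thm:notquasialgebraic}, it is not quasialgebraic. Hence $P$ is an admissible test case for both questions.

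\textbf{Conclusions fail.} The Negative answer to Problem~1.15 shows that $P$ contains no sub-dcpo isomorphic to $\Unit$; in fact it contains no subposet isomorphic to $\Unit$. The Negative answer to Problem~1.14 shows that $\Unit$ is not a Scott-continuous retract of $P$. Both arguments rest on Lemma~\ref{lem:chains}: every chain in $P$ embeds in $\omega+1$ and so is countable. In the retract case, the section $g$ is injective and monotone, so it would produce an uncountable chain in $P$, which is impossible.

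\textbf{Remaining properties of Theorem~\ref{thm:counterexample}.} For completeness we can also check the other listed properties.
\begin{enumerate}
  \item $\operatorname{Max}(P)=\UnitInterval$ is an antichain of cardinality $2^{\aleph_0}$, by the proposition identifying the maximal elements.
  \item $P$ is well-founded, by Proposition~\ref{prop:well-founded}.
  \item $P$ is not algebraic, because every algebraic dcpo is quasialgebraic.
\end{enumerate}

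No step here is a real obstacle, since all the substantive work is already done. The only point to watch is that the notion of ``domain'' in Xu's questions means a dcpo, not a continuous dcpo; our $P$ satisfies this.
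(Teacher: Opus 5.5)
Your proposal is correct and follows the paper's proof: you use quasicontinuity (Theorem~\ref{thm:quasicontinuous}) and non-quasialgebraicity (Theorem~\ref{thm:notquasialgebraic}), then invoke the two preceding theorems, both of which rest on the countable-chain Lemma~\ref{lem:chains}, exactly as the paper does. Your extra checks of the remaining properties in Theorem~\ref{thm:counterexample} are accurate, though the corollary does not need them.
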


\begin{proof}
	The poset $P$ is quasicontinuous and nonquasialgebraic, while the two preceding
	theorems rule out, respectively, a sub-dcpo isomorphic to $[0,1]$ and a
	Scott-continuous retract.
\end{proof}

\section{An interval criterion for arbitrary dcpos}\label{sec:criterion}

For completeness, the example also has a transparent Scott-continuous surjection
onto $[0,1]$.  For $s\in T$, define
\[
a(s)=\min\pi([s]),
\]
the left endpoint of the binary interval determined by $s$.  Let
\[
h:P\longrightarrow[0,1],\qquad
h(r)=r\quad(r\in[0,1]),\qquad
h(s)=a(s)\quad(s\in T).
\]

\begin{proposition}
	The map $h$ is a Scott-continuous surjection.
\end{proposition}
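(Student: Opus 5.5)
Surjectivity is immediate, since $h$ restricted to the maximal elements $\UnitInterval$ is the identity. For Scott continuity I would use the standard characterization: $h$ is Scott-continuous iff it is monotone and preserves suprema of directed sets. I will therefore first check monotonicity and then use Lemma~\ref{lem:directed} to handle directed suprema case by case.

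\emph{Monotonicity.} There are three kinds of comparabilities, and I take them in turn. First, if $r\le q$ in $\UnitInterval$, then $r=q$ and there is nothing to show. Second, if $s\preccurlyeq t$ in $T$, then $\pi([t])\subseteq\pi([s])$, so $a(s)=\min\pi([s])\le\min\pi([t])=a(t)$. Here I will note that $\pi([s])$ is the compact interval $[\,\pi(s0^\omega),\pi(s0^\omega)+2^{-|s|}\,]$, so the minimum exists and $a(s)=\pi(s0^\omega)$. Third, if $s\le r$ with $r\in\UnitInterval$, then $r\in\pi([s])$, hence $a(s)\le r=h(r)$.

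\emph{Directed suprema.} Let $D$ be nonempty and directed. If $D$ contains a maximal element, or if $D\subseteq T$ has bounded lengths, then $D$ has a maximum by Lemma~\ref{lem:directed}. In that case $\bigvee D\in D$, and monotonicity gives $h(\bigvee D)=\sup h(D)$. The remaining case is an unbounded prefix chain $D$ with union $\alpha\in X$, where $\bigvee D=\pi(\alpha)$. I then need
\[
\sup_{s\in D}a(s)=\pi(\alpha).
\]
Monotonicity already gives $\le$. For the reverse inequality, take $s=\alpha\upharpoonright n\in D$ with $n$ arbitrarily large. Since $\pi(\alpha)\in\pi([s])$, which is an interval of length $2^{-n}$ with left endpoint $a(s)$, we get $\pi(\alpha)-a(s)\le2^{-n}$. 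Letting $n\to\infty$ gives equality.

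The only mildly delicate point is this last case together with the explicit description of $\pi([s])$ as an interval of length $2^{-|s|}$, which I would justify from the binary-value formula: $\pi$ maps $[s]$ onto the set $\pi(s0^\omega)+2^{-|s|}\pi(X)$. Everything else is routine.
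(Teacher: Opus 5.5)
Your proposal is correct and follows essentially the same route as the paper: surjectivity from the identity on $\UnitInterval$, monotonicity, then Lemma~\ref{lem:directed} to reduce to the unbounded prefix chain case, where the left endpoints $a(s)$ converge to $\pi(\alpha)$. You add one detail the paper leaves implicit: the explicit description of $\pi([s])$ as an interval of length $2^{-|s|}$, which gives the bound $\pi(\alpha)-a(s)\le 2^{-n}$.
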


\begin{proof}
	Surjectivity is immediate because $h$ is the identity on the maximal part
	$[0,1]$.
	If $s\preccurlyeq t$, then $\pi([t])\subseteq\pi([s])$, hence $a(s)\le a(t)$.  If
	$s\le r$, then $a(s)\le r$.  Thus $h$ is monotone.
	
	By Lemma~\ref{lem:directed}, it remains only to check an unbounded prefix chain.
	If $D\subseteq T$ is such a chain and $\alpha=\bigcup D$, then
	\[
	\sup_{s\in D}h(s)
	=\sup_{s\in D}a(s)
	=\pi(\alpha)
	=h\!\left(\bigvee D\right).
	\]
	The middle equality follows because the left endpoints of the nested binary
	intervals converge to the value of the branch.  Every other directed set has a
	maximum, so preservation of its supremum follows from monotonicity.  Hence $h$
	preserves directed suprema and is Scott continuous.
\end{proof}

We conclude with a general criterion which explains the equivalence used in
the introduction.

\begin{lemma}[A rational chain yields an interval retract]\label{lem:qchain}
Let $P$ be a dcpo containing a chain
\[
 \{c_q:q\in\Q\}
\]
order-isomorphic to $\Q$.  Then there are Scott-continuous maps
\[
 e:\Unit\longrightarrow P,\qquad
 \rho:P\longrightarrow\Unit
\]
such that $\rho\circ e=\operatorname{id}_{\Unit}$.  Moreover, $e(\Unit)$ is
a sub-dcpo Scott-isomorphic to $\Unit$.
\end{lemma}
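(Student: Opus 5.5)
Since $\Q$ is countable, dense and without endpoints, fix an order-isomorphism $\phi:\Q\cap(0,1)\to\Q$. Write $d_q=c_{\phi(q)}$ for $q\in\Q\cap(0,1)$, so that $q\mapsto d_q$ is strictly increasing on the dyadic-free rationals of $(0,1)$. The idea is to fill in the gaps by directed suprema and then build the retraction $\rho$ by measuring how far up the chain an element of $P$ lies.

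\textbf{The embedding.} For $x\in\Unit$ define
\[
e(x)=\bigvee\{d_q: q\in\Q\cap(0,1),\ q<x\}
\]
for $x>0$. The set on the right is a nonempty chain, hence directed, so the supremum exists in $P$. Set $e(0)$ to be some chosen element below the whole chain: the directed infimum need not exist, so take the supremum of the empty family only if $P$ has a bottom. Otherwise, a cleaner choice is to rescale: use $\phi:\Q\cap(0,1)\to\Q$ and send $0$ to $d_{q_0}$ for a chosen $q_0$, reparametrising the interval to $[q_0,1)$ and then onto $\Unit$. Monotonicity of $e$ is clear. Scott continuity holds because, for a directed $S\subseteq\Unit$ with $x=\sup S$, the rationals below $x$ are exactly the union of those below each $s\in S$, and suprema of unions of directed families commute.

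\textbf{The retraction.} Define
\[
\rho(p)=\sup\{q: d_q\ll p\}
\]
or, more simply,
\[
\rho(p)=\sup\{q: d_q\le p\},
\]
with $\sup\varnothing=0$, possibly adjusting by the chosen base point. Monotonicity of $\rho$ is immediate. We then need $\rho(e(x))=x$. If $q<x$, then $d_q\le e(x)$, so $\rho(e(x))\ge x$. The reverse inequality is the delicate point: $d_q\le e(x)$ with $q>x$ can genuinely happen when the supremum ``jumps''. To prevent this, I would use density. Choose $q<q'$ both above $x$. Then $e(x)\le d_{q''}$ for any $x<q''<q'$, and $d_q\le e(x)$ would force $d_q\le d_{q''}$ with $q''<q$ taken close to $x$, contradicting strictness. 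So $\rho(e(x))\le x$.

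\textbf{Main obstacle.} The hardest part is Scott continuity of $\rho$: for directed $D$ one needs $\rho(\bigvee D)\le\sup_{d\in D}\rho(d)$. I would handle this by using way-below in $\Unit$. Given $d_q\le\bigvee D$ and any $q'<q$, we would want $d_{q'}\le$ some $d\in D$, and this is not automatic in a general dcpo. I would therefore switch to the Scott-open-set form of the retraction:
\[
\rho(p)=\sup\{q: p\notin\downarrow d_q\}.
\]
Each complement $P\setminus\downarrow d_q$ is Scott open, and these sets are nested. A map defined as the supremum of $q$ over a nested family of Scott-open sets, with $q\mapsto U_q$ decreasing, is Scott continuous, since $\{p:\rho(p)>t\}=\bigcup_{q>t}U_q$ is Scott open, and that is exactly lower semicontinuity into $\Unit$ with its Scott topology. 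I would then verify $\rho\circ e=\operatorname{id}$ by the same density argument, this time checking that $e(x)\notin\downarrow d_q$ exactly when $q<x$, possibly after passing to the dense subset. The final claim, that $e(\Unit)$ is a sub-dcpo Scott-isomorphic to $\Unit$, follows because $e$ is Scott continuous and injective (injective since it has the left inverse $\rho$). The set $e(\Unit)$ is closed under directed suprema, since those are images of suprema, and $\rho$ restricted to $e(\Unit)$ is the inverse of $e$.
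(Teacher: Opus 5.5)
Your proposal is correct and is essentially the paper's argument. In both, $e$ takes suprema of the chain below $t$, and your final retraction $\rho(p)=\sup\{q:p\notin\downarrow d_q\}$ is the same map as the paper's $\sup(\{0\}\cup\{t:p\not\le e(t)\})$ (since $d_q\le e(t)$ for $q<t$ and $e(t)\le d_q$ for $q\ge t$), with Scott continuity coming from the same description of $\rho^{-1}((a,1])$ as a union of complements of principal ideals. The one awkward point, defining $e(0)$, comes only from your reindexing to $\Q\cap(0,1)$: the paper keeps the full chain $\{c_q:q\in\Q\}$, so $\{c_q:q<t\}$ contains every $c_q$ with $q<0$ and is nonempty for all $t\in\Unit$, and no base point or rescaling is needed.
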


\begin{proof}
For $t\in\Unit$, define
\begin{equation}\label{eq:e}
 e(t)=\bigvee\{c_q:q<t\}.
\end{equation}
The set on the right is nonempty and directed.  If $s<t$, choose rationals
$s<q<r<t$.  Then
\[
 e(s)\leq c_q<c_r\leq e(t),
\]
so $e$ is a strict order embedding.

Let $D\subseteq\Unit$ be directed and put $t=\bigvee D$.  The set $e[D]$ is
directed.  If $q<t$, some $d\in D$ satisfies $q<d$, since otherwise $q$
would be an upper bound of $D$.  Hence
\[
 c_q\leq e(d)\leq\bigvee e[D].
\]
Taking suprema over $q<t$ gives $e(t)\leq\bigvee e[D]$; the reverse
inequality follows from monotonicity.  Therefore
\begin{equation}\label{eq:edirected}
 e\!\left(\bigvee D\right)=\bigvee e[D].
\end{equation}
Thus $e$ is Scott-continuous.  If $A\subseteq e(\Unit)$ is directed, then
$e^{-1}[A]$ is directed because $e$ reflects order, and
\eqref{eq:edirected} gives
\[
 \bigvee_P A=e\!\left(\bigvee_{\Unit}e^{-1}[A]\right)\in e(\Unit).
\]
Hence the image is a sub-dcpo.

For $p\in P$, put
\begin{equation}\label{eq:rho}
 \rho(p)=\sup\left(\{0\}\cup
       \{t\in\Unit:p\not\leq e(t)\}\right).
\end{equation}
If $p\leq p'$, then $p\not\leq e(t)$ implies $p'\not\leq e(t)$, so $\rho$
is monotone.  For $t\in\Unit$, set
\[
V_t=P\setminus\downarrow e(t)
\]
which is Scott-open, since $\downarrow e(t)$ is Scott-closed.  For
$0\leq a<1$,
\begin{equation}\label{eq:rhopreimage}
 \rho^{-1}((a,1])=\bigcup_{\substack{t\in\Unit\\t>a}}V_t.
\end{equation}
Since
\[
 \{\Unit\}\cup\{(a,1]:0\leq a<1\}
\]
is a basis for the Scott topology of $\Unit$, $\rho$ is Scott-continuous.

For completeness, let $D\subseteq P$ be directed, $p=\bigvee D$, and
$u=\bigvee\rho[D]$.  Monotonicity gives $u\leq\rho(p)$.  If
$u<\rho(p)$, choose $a$ with $u<a<\rho(p)$.  The Scott-open set in
\eqref{eq:rhopreimage} contains $p$, hence meets $D$, giving $d\in D$ with
$\rho(d)>a>u$, a contradiction.  Therefore
\[
 \rho\!\left(\bigvee D\right)=\bigvee\rho[D].
\]
Finally, the order-embedding property of $e$ gives
\[
 e(s)\not\leq e(t)\quad\Longleftrightarrow\quad t<s.
\]
Substitution into \eqref{eq:rho} yields
\[
 \rho(e(s))=\sup\bigl(\{0\}\cup\{t\in\Unit:t<s\}\bigr)=s.
\]
\end{proof}

\begin{proposition}\label{prop:equivalence}
For every dcpo $P$, the following are equivalent.
\begin{enumerate}
\item $P$ contains a chain order-isomorphic to $\Q$.
\item $P$ contains a sub-dcpo Scott-isomorphic to $\Unit$.
\item $\Unit$ is a Scott-continuous retract of $P$.
\end{enumerate}
\end{proposition}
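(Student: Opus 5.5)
We prove the cycle $(1)\Rightarrow(2)\wedge(3)$, $(2)\Rightarrow(1)$, $(3)\Rightarrow(1)$.

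The implications $(1)\Rightarrow(2)$ and $(1)\Rightarrow(3)$ are exactly the content of Lemma~\ref{lem:qchain}. Given a chain $\{c_q:q\in\Q\}$ order-isomorphic to $\Q$, that lemma produces Scott-continuous maps $e$ and $\rho$ with $\rho\circ e=\operatorname{id}_{\Unit}$, which gives $(3)$. It also shows that $e(\Unit)$ is a sub-dcpo of $P$ and that $e$ is an order embedding preserving directed suprema. So $e$ is an order isomorphism onto its image, and since directed sups in the image are computed as in $P$, this image is Scott-isomorphic to $\Unit$, which gives $(2)$. Nothing new is needed here beyond reading off the conclusions of the lemma.

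For $(2)\Rightarrow(1)$, suppose $S\subseteq P$ is a sub-dcpo and $\varphi:\Unit\to S$ is an order isomorphism. Then $\{\varphi(q):q\in\Q\cap(0,1)\}$ is a chain in $P$ order-isomorphic to $\Q\cap(0,1)$. Since $\Q\cap(0,1)\cong\Q$ (both are countable dense linear orders without endpoints, by Cantor's theorem; or via an explicit piecewise-linear bijection), this chain is order-isomorphic to $\Q$.

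For $(3)\Rightarrow(1)$, let $r:P\to\Unit$ and $e':\Unit\to P$ be Scott-continuous with $r\circ e'=\operatorname{id}$. Scott-continuous maps are monotone, and $e'$ is injective because it has a left inverse. We also need $e'$ to reflect order. If $e'(s)\le e'(t)$, then monotonicity of $r$ gives $s=r(e'(s))\le r(e'(t))=t$. Hence $e'$ is an order embedding, and $e'[\Q\cap(0,1)]$ is a chain isomorphic to $\Q$, as before.

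The only point requiring any care is that $(2)$ asks for a \emph{Scott}-isomorphism. Lemma~\ref{lem:qchain} already supplies the needed sub-dcpo property, so I expect no real obstacle. The main check is simply that "order-isomorphic onto a sub-dcpo" means the same thing as Scott-isomorphic, and this holds because order isomorphisms preserve all existing suprema.
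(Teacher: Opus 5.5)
Your proof is correct and matches the paper's argument: (1)$\Rightarrow$(2) and (3) come from Lemma~\ref{lem:qchain}, (2)$\Rightarrow$(1) comes from restricting to rationals, and the key observation is that the section in a retraction reflects order because the retraction is monotone. The only difference is that you close the cycle with (3)$\Rightarrow$(1), whereas the paper proves (3)$\Rightarrow$(2) and additionally checks that $e(\Unit)$ is closed under directed suprema. Your shortcut is legitimate since (1)$\Rightarrow$(2) is already available, and you also supply the detail $\Q\cap(0,1)\cong\Q$ that the paper calls ``clear''.
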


\begin{proof}
Lemma~\ref{lem:qchain} proves that (1) implies (2) and (3).  Clearly (2)
implies (1).  If $\rho\circ e=\operatorname{id}_{\Unit}$, then $e$ is an
order embedding.  Moreover, if $A\subseteq e(\Unit)$ is directed, then
$e^{-1}[A]$ is directed because $e$ reflects order, and Scott continuity gives
\[
 \bigvee_P A=e\!\left(\bigvee_{\Unit}e^{-1}[A]\right)\in e(\Unit).
\]
Thus $e(\Unit)$ is a sub-dcpo Scott-isomorphic to $\Unit$, proving (3)
implies (2).
\end{proof}

\end{document}